\documentclass[a4paper,11pt]{jacodesmath-noT1} 

\setlength{\headheight}{13.6pt} 

\usepackage{hyperref}
\usepackage[capitalize]{cleveref}

\newcommand{\MR}[1]{\href{http://www.ams.org/mathscinet-getitem?mr=#1}{MR#1}}


\numberwithin{equation}{section}

\theoremstyle{plain}
\newtheorem{prop}[equation]{Proposition}
\newtheorem{cor}[equation]{Corollary}
\newtheorem{lem}[equation]{Lemma}

\crefname{cor}{Corollary}{Corollaries}
\Crefname{cor}{Corollary}{Corollaries}
\crefname{lem}{Lemma}{Lemmas}
\Crefname{lem}{Lemma}{Lemmas}

\theoremstyle{definition}
\newtheorem{notation}[equation]{Notation}
\newtheorem{assumps}[equation]{Assumptions}
\newtheorem{defn}[equation]{Definition}

\theoremstyle{definition}
\newtheorem{rem}[equation]{Remark}

\crefformat{rems}{Remark~#2#1#3}

 \newcounter{case}
 \newenvironment{case}[1][\unskip]{\refstepcounter{case}\bf
 \medskip \noindent Case \thecase\ #1.\normalfont\em\ }{\unskip\upshape}
 \renewcommand{\thecase}{\arabic{case}}
 
 \newcounter{subcase}
 \newenvironment{subcase}[1][\unskip]{\refstepcounter{subcase}\bf
 \medskip \noindent \hskip2\parindent Subcase \thesubcase\ #1. \it}{\unskip\upshape}
\numberwithin{subcase}{case}
\crefformat{subcase}{Subcase~#2#1#3}

\newcommand{\integer}{\mathbb{Z}}

\newcommand{\voltage}{\Pi}
\newcommand{\quot}{\overline}

\DeclareMathOperator{\Cay}{Cay}
\DeclareMathOperator{\wt}{wt}

\newcommand{\edge}{\mathbin{\hbox{\vrule height 2.5 pt depth -1.75 pt width 10pt}}}

\makeatletter
\newcommand{\noprelistbreak}{\smallskip\@nobreaktrue\nopagebreak} 
\makeatother

\makeatletter
\def\cref@thmoptarg[#1]#2#3#4{%
  \ifhmode\unskip\unskip\par\fi%
  \normalfont%
  \trivlist%
  \let\thmheadnl\relax%
  \let\thm@swap\@gobble%
  \thm@notefont{\fontseries\mddefault\upshape}%
  \thm@headpunct{.}
  \thm@headsep 5\p@ plus\p@ minus\p@\relax%
  \thm@space@setup%
  #2
  \@topsep \thm@preskip               
  \@topsepadd \thm@postskip           
  \def\@tempa{#3}\ifx\@empty\@tempa%
    \def\@tempa{\@oparg{\@begintheorem{#4}{}}[]}%
  \else%
    \refstepcounter[#1]{#3}
    \def\dth@counter{} 
    \def\@tempa{\@oparg{\@begintheorem{#4}{\csname the#3\endcsname}}[]}%
  \fi%
  \@tempa}
\makeatother

\usepackage{color}
\setlength{\marginparwidth}{60pt}
\setlength{\marginparsep}{0pt}
\newcommand{\refnote}[1]{\marginpar{%
	\color{blue}
	\vskip0.5\baselineskip
	\vbox to 0pt{\vss
	$\begin{pmatrix} \text{note} \\[-2pt] \text{\ref{#1}} \end{pmatrix}$%
	\vskip -4pt}}}
\newcommand{\refnotes}[2]{\marginpar{%
	\color{blue}
	\vskip\baselineskip\vbox to 0pt{\vss
	$\begin{pmatrix} \text{notes} \\[-2pt] \text{\ref{#1}} \\[-2pt] \text{\ref{#2}} \end{pmatrix}$%
	\vskip -4pt}}}
\theoremstyle{definition}
\newtheorem{aid}[equation]{}
\newcommand{\oldendaid}{}
\let\oldendaid=\endaid
\renewcommand{\endaid}{\oldendaid\bigskip\hrule width\textwidth \filbreak\bigskip}
\crefformat{aid}{Note~#2#1#3}

\newcommand{\bya}[1]{&\stackrel{\textstyle \quot a}{\longrightarrow}&#1}
\newcommand{\byai}[1]{&\stackrel{\textstyle \quot a{}^{-1}}{\longrightarrow}&#1}
\newcommand{\byb}[1]{&\stackrel{\textstyle \quot b}{\longrightarrow}&#1}
\newcommand{\bybi}[1]{&\stackrel{\textstyle \quot b{}^{-1}}{\longrightarrow}&#1}
\newcommand{\byc}[1]{&\stackrel{\textstyle \quot c}{\longrightarrow}&#1}
\newcommand{\byci}[1]{&\stackrel{\textstyle \quot c{}^{-1}}{\longrightarrow}&#1}

\rhead{ \fancyplain{}{Dave Witte Morris} }
\lhead{Nonsolvable groups whose Cayley graphs are hamiltonian}

\begin{document}

\title{ \textbf{Infinitely Many Nonsolvable Groups \\ Whose Cayley Graphs are Hamiltonian}}

\author{Dave Witte Morris%
\thanks{Dave.Morris@uleth.ca, 
\href{http://people.uleth.ca/~dave.morris/}{{\textsf{http://people.uleth.ca/\!$\sim$dave.morris/}}}}}
\affil{Department of Mathematics and Computer Science, University of Lethbridge, 
\\ Lethbridge, Alberta, T1K\,6R4, Canada}

\maketitle

\begin{abstract}
We show there are infinitely many finite groups~$G$, such that every connected Cayley graph on~$G$ has a hamiltonian cycle, and $G$ is not solvable.
Specifically, we show that if $A_5$~is the alternating group on five letters, and $p$~is any prime, such that $p \equiv 1 \pmod{30}$, then every connected Cayley graph on the direct product $A_5 \times \integer_p$ has a hamiltonian cycle.

\textbf{Keywords.} Cayley graph, hamiltonian cycle, solvable group, alternating group.
\\
\textbf{2010 AMS Classification.} 05C25, 05C45
\end{abstract}

\section{Introduction}

It has been conjectured that every connected Cayley graph on every finite group has a hamiltonian cycle (unless the graph has less than three vertices). In support of this conjecture, the literature provides numerous infinite families of finite groups~$G$, for which it is known that every connected Cayley graph on~$G$ has a hamiltonian cycle. (See \cite{M2Slovenian-LowOrder} and its references for more information.) However, it seems that the union of these families contains only finitely many groups that are not solvable. This note puts an end to that unsatisfactory state of affairs:

\begin{prop}
There are infinitely many finite groups~$G$, such that every connected Cayley graph on~$G$ has a hamiltonian cycle, and $G$ is not solvable.
\end{prop}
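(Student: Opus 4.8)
The plan is to derive the Proposition from a single arithmetic family together with two standard facts. Specifically, I would prove the sharper statement advertised in the abstract: for every prime~$p$ with $p \equiv 1 \pmod{30}$, every connected Cayley graph on $A_5 \times \integer_p$ has a hamiltonian cycle. Granting this, the Proposition follows at once. Dirichlet's theorem on primes in arithmetic progressions, applied to the residue~$1$ modulo~$30$ (legitimate since $\gcd(1,30)=1$), supplies infinitely many such primes; the groups $A_5 \times \integer_p$ have distinct orders~$60p$, hence are pairwise nonisomorphic; and each is nonsolvable because it has the nonabelian simple group~$A_5$ as a composition factor. So the entire content lies in the hamiltonicity statement.

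To prove that statement, fix a connected generating set~$S$ of $G = A_5 \times \integer_p$ and let $N = \{1\} \times \integer_p$, a central subgroup of prime order with $G/N \cong A_5$. Writing each $s \in S$ as $s = (g_s, z_s)$, let $\quot{S} = \{\, g_s : s \in S \,\}$ be the projection to~$A_5$. Since $A_5$ is perfect, $\langle \quot{S}\rangle = A_5$ forces $A_5 \times \{0\} \le \langle S\rangle$, so the hypothesis that $S$ generates~$G$ is equivalent to the two conditions that $\quot{S}$ generates~$A_5$ and that $z_s \neq 0$ for at least one~$s$. I would then invoke the Factor Group Lemma: if $(\quot{s}_1, \ldots, \quot{s}_{60})$ is a hamiltonian cycle in $\Cay(A_5; \quot{S})$ whose voltage $\voltage = \sum_i \pm z_{s_i} \in \integer_p$ is nonzero (equivalently, generates~$N$), then it lifts to a hamiltonian cycle in $\Cay(G; S)$. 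Thus the problem reduces to a purely $A_5$-level task: exhibit a hamiltonian cycle in the connected Cayley graph $\Cay(A_5; \quot{S})$ with nonzero voltage.

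I would organize the construction as a case analysis over the generating sets of~$A_5$, classified up to $\mathrm{Aut}(A_5) = S_5$ by the orders of the generators and their products, the only element orders present being $1,2,3,5$. For each type I would write down explicit hamiltonian cycles as sequences of the relevant generators and their inverses, and record how many times, counted with sign, each generator is traversed. The decisive device is \emph{flexibility}: for a fixed~$S$ I would produce not one but several hamiltonian cycles, so that the map sending a cycle to its vector of signed generator-counts takes enough distinct values. Since the voltage depends linearly on $(z_s)$, two cycles whose counts of some generator~$\quot{s}$ differ have voltages differing by $c\,z_s$ for a specific integer~$c$; arranging such a pair for every generator lets me, whenever $z_s \neq 0$, select a cycle with $\voltage \neq 0$.

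The main obstacle is precisely this voltage control, and it is where the congruence $p \equiv 1 \pmod{30}$ earns its keep. The coefficients~$c$ produced by the cycle modifications are small integers built from the element orders $2,3,5$ of~$A_5$ and the cycle length~$60$; requiring $p \equiv 1 \pmod{30}$, hence $p$ coprime to $2,3,5$ with $p \ge 31$, is designed to guarantee that these coefficients are nonzero modulo~$p$, so that a nonzero $z_s$ genuinely forces a nonzero voltage rather than an accidental cancellation. The laborious part of the argument will therefore be assembling, for each class of generating set of~$A_5$ and for each distinguished generator within it, a supply of hamiltonian cycles rich enough to realize the needed count-differences; the central-subgroup reduction and the appeal to Dirichlet's theorem are routine by comparison.
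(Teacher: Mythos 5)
Your derivation of the Proposition from the sharper hamiltonicity statement is exactly the paper's route: there are infinitely many primes $p \equiv 1 \pmod{30}$, the groups $A_5 \times \integer_p$ are pairwise nonisomorphic and nonsolvable, so everything rests on showing every connected $\Cay(A_5\times\integer_p;S)$ is hamiltonian. Your plan for that statement also matches the paper's main engine: a case analysis of generating sets of $A_5$ up to $\mathrm{Aut}(A_5)\cong S_5$, the Factor Group Lemma, and a ``flexibility'' device producing several hamiltonian cycles per connection set whose signed weight vectors are independent modulo~$p$ (this is precisely the paper's \cref{det}).

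There is, however, a genuine gap in the reduction. The claim that the problem ``reduces to a purely $A_5$-level task: exhibit a hamiltonian cycle in $\Cay(A_5;\quot S)$ with nonzero voltage'' fails for some connection sets, and no amount of flexibility repairs it. Take $S=\{a,b,c\}$ with $b,c\in A_5\times\{0\}$ generating $A_5\times\{0\}$ and $a=(e,1)$; this is a minimal generating set of $A_5\times\integer_p$, but in the quotient the generator $a$ contributes only loops, so every hamiltonian cycle in $\Cay(G/N;S)$ avoids $a$ and has voltage $\wt(b)z_b+\wt(c)z_c=0$. The Factor Group Lemma is simply inapplicable here, and the same difficulty arises whenever the one generator carrying the nonzero $\integer_p$-component is redundant modulo~$N$. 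The paper handles this case by a different construction (\cref{nonminA5}): take a hamiltonian cycle of $\Cay(A_5;S\smallsetminus\{a\})$ inside $A_5\times\{0\}$ itself and splice in detours $a^{p-1},a^{-(p-1)}$ that sweep out each $\integer_p$-fiber. That construction needs $\quot a\,^{p-1}=e$ for every possible $\quot a$, i.e.\ $30\mid p-1$ since the element orders of $A_5$ are $1,2,3,5$ --- and \emph{this} is the true role of the hypothesis $p\equiv1\pmod{30}$. Your explanation of the congruence (keeping small weight determinants nonzero mod~$p$) is a misdiagnosis: for that purpose any sufficiently large prime would do, and the paper says explicitly that a much weaker hypothesis suffices in all other parts of the proof. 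So your outline is missing both a case and the idea needed to handle it.
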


Since the alternating group~$A_5$ (of order~$60$) is a nonabelian simple group, and is therefore not solvable, the above is an immediate consequence of the following more specific result.

\begin{prop} \label{A5xZp}
If $p$ is a prime, such that $p \equiv 1 \pmod{30}$, then every connected Cayley graph on the direct product $A_5 \times \integer_p$ has a hamiltonian cycle.
\end{prop}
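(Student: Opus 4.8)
The plan is to apply the \emph{Factor Group Lemma} to the central subgroup $N = \{e\} \times \integer_p$, which is cyclic and normal in $G = A_5 \times \integer_p$, with quotient $G/N \cong A_5$. Writing each element of $G$ as $(a,z)$ with $a \in A_5$ and $z \in \integer_p$, the congruence $p \equiv 1 \pmod{30}$ guarantees that $x^p = x$ for every $x \in A_5$ (all element orders in $A_5$ lie in $\{1,2,3,5\}$, each dividing~$30$), so that $(a,z)^p = (a,0)$ for every element. I would use this first to show that $A_5 \times \{0\} \le \langle S \rangle$ for any generating set $S$, whence the projection $\quot S$ of $S$ to $A_5$ is itself a generating set of $A_5$; and second, after discarding redundant generators (a hamiltonian cycle in a spanning subgraph is a hamiltonian cycle in $\Cay(G;S)$), that $S$ may be taken irredundant, and hence, by coprimality of $|A_5|$ and $p$, to consist of only two or three elements. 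This reduces the proposition to a short list of possible shapes for $S$.

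Given such an $S$, the Factor Group Lemma asserts the following: if $\quot C = (\quot{s_1}^{\epsilon_1}, \dots, \quot{s_m}^{\epsilon_m})$ is a hamiltonian cycle in $\Cay(A_5; \quot S)$, and its voltage $V(\quot C) = \sum_j \epsilon_j z_j \in \integer_p$ (where $s_j = (\quot{s_j}, z_j)$) generates $N$, then $\quot C$ lifts to a hamiltonian cycle in $\Cay(G;S)$. Since $|N| = p$ is prime, ``generates $N$'' means simply ``is nonzero''. It is known that every connected Cayley graph on $A_5$ has a hamiltonian cycle \cite{M2Slovenian-LowOrder}, so a hamiltonian cycle $\quot C$ certainly exists; the entire problem is therefore to arrange that some such $\quot C$ has \emph{nonzero} voltage.

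To force a nonzero voltage I would argue that the voltages of hamiltonian cycles cannot all vanish. Because $S$ generates the factor $\integer_p$, at least one $s_j = (\quot{s_j}, z_j)$ has $z_j \neq 0$, so the voltage map on the cycle space of $\Cay(A_5; \quot S)$ is not identically zero. The heart of the argument is to realize a cycle of nonzero voltage as a difference of two hamiltonian cycles: starting from one hamiltonian cycle $\quot C$, I would perform a local rerouting that replaces a short segment by an alternative segment through the same vertices traversing the generator $\quot{s_j}$ a different net number of times, thereby changing the voltage by a nonzero multiple of $z_j$. Carrying this out means exhibiting, for each shape on the short list, an explicit hamiltonian cycle in $A_5$ together with such a rerouting, and verifying that the required alternative segments actually exist inside $\Cay(A_5; \quot S)$; this case analysis is where I expect the real work to lie.

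One shape must be handled separately: when some generator projects trivially, say $s_j = (e, z_j)$ with $z_j \neq 0$, it contributes no edge to $\Cay(A_5; \quot S)$ but instead satisfies $\langle s_j \rangle = N$. Here I would not invoke the Factor Group Lemma directly; rather, taking a hamiltonian cycle (or hamiltonian path) in $\Cay\bigl(A_5; \quot S \setminus \{e\}\bigr)$ and using the central generator $s_j$ of order $p$ to pass between the $p$ cosets of $N$, I would assemble a hamiltonian cycle of $\Cay(G;S)$ by a prism-type construction. With this final case disposed of, every connected Cayley graph on $A_5 \times \integer_p$ is covered, which proves the proposition.
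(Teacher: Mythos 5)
Your skeleton---factor out the central $\integer_p$, invoke the Factor Group Lemma, cut $S$ down to an irredundant set of two or three elements, and then produce hamiltonian cycles in $\Cay(A_5;\quot S)$ of nonzero voltage---is precisely the paper's strategy. The genuine gap is in your case division: you separate out only the generators projecting to the \emph{identity} of $A_5$, but the case that actually requires $p\equiv1\pmod{30}$ is the one where $S$ is irredundant in $A_5\times\integer_p$ while some $c\in S$ is redundant modulo $\integer_p$ and has $\quot c\neq e$. Concretely, take $a=\bigl((1,2)(3,4),0\bigr)$, $b=\bigl((1,2,3,4,5),0\bigr)$, $c=\bigl((1,4)(2,3),1\bigr)$: no proper subset of $\{a,b,c\}$ generates (since $\langle\quot a,\quot c\rangle$ is a Klein four-group and $\langle\quot b,\quot c\rangle$ is dihedral of order~$10$), yet $\quot c$ is redundant in $\quot S$ and $c$ is not central, so your prism construction does not apply; moreover a hamiltonian cycle of $\Cay(A_5;\quot S)$ may use no $\quot c$-edge at all, in which case its voltage is forced to be~$0$ no matter how you relabel or reroute among the $\quot a$- and $\quot b$-edges. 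The paper's \cref{nonminA5} handles exactly this situation by walking a hamiltonian cycle of $\Cay\bigl(A_5;\quot{S\smallsetminus\{c\}}\bigr)$ and splicing in $c^{p-1}$ and $c^{-(p-1)}$ at alternate steps; this works because $\quot c\,^{p-1}=e$, i.e.\ because $30\mid p-1$. Your own use of the congruence (to get $x^p=x$ and hence $A_5\times\{0\}\le\langle S\rangle$) establishes something that already follows from $\gcd(60,p)=1$, so as written your argument never uses the hypothesis in an essential way---a symptom of the missing case.

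A second, smaller soft spot is the voltage step. Nonvanishing of the voltage map on the cycle space does not yield a \emph{hamiltonian} cycle of nonzero voltage, and a rerouting changes the voltage by $\sum_i\delta_i z_i$, where $\delta$ records the change in net weight of \emph{every} generator; since the $z_i$ are unknown, ``a nonzero multiple of $z_j$'' is guaranteed only if the rerouting isolates $s_j$, which a local modification typically will not do. The workable formulation is the paper's \cref{det}: exhibit $\ell$ hamiltonian cycles whose $\ell\times\ell$ matrix of net weights is invertible mod~$p$, so that for every nonzero vector $(z_i)$ some cycle has nonzero voltage. That such cycles exist is not automatic, and the paper establishes it only by listing explicit (computer-found) hamiltonian cycles for each of the dozen or so shapes of $\quot S$; that concrete work is still owed by your outline.
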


The proof is based on a case-by-case analysis of Cayley graphs of the group~$A_5$. Most of the hamiltonian cycles were found by computer search (using a fairly naive backtracking algorithm).

\begin{rem}
Rather than merely groups that are not solvable, it would be much more interesting to find infinitely many finite, \emph{simple} groups~$G$, such that every connected Cayley graph on~$G$ has a hamiltonian cycle. Regrettably, the known methods seem to be hopelessly inadequate for this problem.
\end{rem}

\section{Preliminaries}

\begin{defn}
Let $S$ be a subset of a finite group~$G$. The \emph{Cayley graph} of~$G$ with respect to the connection set~$S$ is the graph $\Cay(G;S)$ whose vertices are the elements of~$G$, and with edges $g \edge gs$ and $g \edge g s^{-1}$, for each $g \in G$ and $s \in S$.
\end{defn}

\begin{notation}
Suppose $S$ is a subset of a finite group~$G$.
For $s_1,\ldots,s_m \in S \cup S^{-1}$, we use $(s_i)_{i=1}^m = (s_1,\ldots,s_m)$ to denote the walk in $\Cay(G;S)$ that visits (in order), the vertices
	$$ e, \, s_1, \ s_1 s_2, \, s_1 s_2 s_3, \ \ldots, \, s_1s_2\cdots s_m .$$
We use $(s_1,\ldots,s_m)^k$ to denote the concatenation of~$k$ copies of the sequence $(s_i)_{i=1}^m$, and the following illustrates other notations that are often useful:
	$$ (a^2,b^{-3},s_i)_{i=1}^3 = (a,a,b^{-1},b^{-1},b^{-1},s_1,a,a,b^{-1},b^{-1},b^{-1},s_2,a,a,b^{-1},b^{-1},b^{-1},s_3) .$$
\end{notation}

\begin{notation} \label{QuotDefn}
We use $\quot{\phantom{x}} \colon A_5 \times \integer_p \to A_5$ to denote the natural projection (so $\quot{(x,y)} = x$).
\end{notation}

Our argument in \cref{MainProof} is based on the same outline as the proof in \cite{M2Slovenian-A5} of the following result.

\begin{lem}[\hskip-0.00001em
{\cite{M2Slovenian-A5}}] \label{A5} %
Every connected Cayley graph on~$A_5$ has a hamiltonian cycle.
\end{lem}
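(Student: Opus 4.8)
The plan is to reduce the statement to a finite check and then dispatch each remaining case with an explicit hamiltonian cycle. The first reduction is the elementary fact that hamiltonicity passes to supergraphs: if $S' \subseteq S$ and $\Cay(A_5;S')$ has a hamiltonian cycle, then that cycle uses only edges that are also present in $\Cay(A_5;S)$, so it is a hamiltonian cycle of $\Cay(A_5;S)$ as well. Since $\Cay(A_5;S)$ is connected exactly when $S$ generates~$A_5$, it therefore suffices to treat the case in which $S$ is an \emph{irredundant} generating set, meaning that no proper subset of~$S$ generates~$A_5$.

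Next I would cut down the number of such~$S$ using symmetry. Every $\phi \in \operatorname{Aut}(A_5) \cong S_5$ induces a graph isomorphism $\Cay(A_5;S) \to \Cay(A_5;\phi(S))$, so only one representative from each $S_5$-orbit of irredundant generating sets needs to be examined. A short argument bounds the sizes that occur: no proper subgroup of~$A_5$ has an irredundant generating set of size~$3$ (the only proper subgroup whose order is divisible by three primes is $A_4$, and one checks directly that $A_4$ has no such set), and each $3$-element subset of a hypothetical size-$4$ irredundant set would be an irredundant generating set of a proper subgroup; hence $|S| \in \{2,3\}$. I would then organize the cases by~$|S|$ and by the multiset of orders of the elements of~$S$, each order being $2$, $3$, or~$5$. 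Using that $S_5$ is transitive on the involutions and on the $3$-cycles and fuses the two conjugacy classes of $5$-cycles, the list of representatives is short; impossible types are discarded at once, for instance a pair of involutions, which generates only a dihedral group and never~$A_5$.

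For each surviving representative~$S$ I would then produce a hamiltonian cycle explicitly, recording it as a word in $S \cup S^{-1}$ in the sequence notation of the preliminaries. Here the genuine difficulty appears: because $A_5$ is simple, there is no nontrivial quotient available, so the standard factor-group technique of building a cycle in a smaller Cayley graph and lifting it is unavailable, and the cycles must instead be constructed essentially by hand. The natural device is to fix a convenient subgroup---$A_4$ (index~$5$), $D_{10}$ (index~$6$), or $S_3$ (index~$10$)---and to traverse its cosets in a systematic order, splicing together paths within and between cosets; failing a clean pattern, one delegates the search to a backtracking program, as the author indicates was done. I expect the $2$-generated cases to be the main obstacle, since there the graph is comparatively sparse and there is the least structural machinery to invoke: verifying hamiltonicity of these particular $60$-vertex graphs rests almost entirely on explicit computation rather than on a general principle.
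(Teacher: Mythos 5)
Your outline coincides with the strategy of the source this paper actually relies on: the lemma is not proved here at all, but is imported from the unpublished appendix \cite{M2Slovenian-A5}, whose proof (like \cref{MainProof} of this paper, which the author says follows the same outline) proceeds exactly as you describe --- restrict to minimal (irredundant) generating sets, use $\operatorname{Aut}(A_5) \cong S_5$ to cut the generating sets down to a short list of representatives organized by the element orders, discard the impossible types, and exhibit an explicit hamiltonian cycle for each survivor. Your reductions are all sound: hamiltonicity does pass to spanning supergraphs; two involutions generate a dihedral group and so never $A_5$; and your bound $|S| \in \{2,3\}$ is correct (the paper's own appendix note gives a slightly different chain argument through $A_4$ and its Sylow $2$-subgroup, but your version --- a $3$-subset of a size-$4$ irredundant set would be an irredundant $3$-generating set of a proper subgroup, and only $A_4$ has order divisible by three primes, yet $A_4$ has no such set because any involution together with any $3$-cycle already generates it --- works equally well).

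The genuine gap is that you stop exactly where the proof begins. For this lemma there is no lifting machinery, no quotient, and no general theorem to invoke; as you yourself observe, the entire mathematical content is the list of explicit hamiltonian cycles, one for each orbit representative (roughly a dozen cases, each a closed spanning walk of length $60$ written as a word in $S \cup S^{-1}$ and then verified). A proof that says ``one delegates the search to a backtracking program'' without recording and checking the resulting words is a proof plan, not a proof --- which is precisely why the cited appendix consists almost entirely of such explicit cycles, and why the analogous cycles for the present paper occupy its own appendix. So: right architecture, correct reductions, but the load-bearing data is absent.
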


The following result is the reason that the statement of \cref{A5xZp} assumes $p \equiv 1 \pmod{30}$. A much weaker hypothesis would suffice in all other parts of the proof.

\begin{cor} \label{nonminA5}
Let $S$ be a minimal generating set of $A_5 \times \integer_p$, and assume $p \equiv 1 \pmod{30}$. If there exists $a \in S$, such that $\quot{S \smallsetminus \{a\}}$ generates~$A_5$, then $\Cay(A_5 \times \integer_p;S)$ has a hamiltonian cycle.
\end{cor}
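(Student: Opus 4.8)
The plan is to pass to the quotient~$A_5$ and invoke the standard Factor Group Lemma. Let $N = \{e\} \times \integer_p$, a cyclic normal subgroup of order~$p$, and note that the projection $\quot{\phantom{x}}$ of \cref{QuotDefn} is exactly the quotient map $A_5 \times \integer_p \to (A_5 \times \integer_p)/N \cong A_5$. First I would pin down the structure of~$S$. Writing $H = \langle S \smallsetminus \{a\} \rangle$, the hypothesis gives $\quot H = A_5$, so $H \cap N$ is either $N$ or trivial. If $H \cap N = N$ then $H = G$, which is impossible because $S$ is minimal (it would give $a \in H$); if $H \cap N$ is trivial then $H$ is a complement to~$N$ mapping isomorphically onto~$A_5$, i.e. the graph of a homomorphism $A_5 \to \integer_p$, and since $A_5$ is perfect this homomorphism is trivial, forcing $H = A_5 \times \{e\}$. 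Consequently every element of $S \smallsetminus \{a\}$ lies in $A_5 \times \{e\}$, while $a = (\quot a, t)$ with $t \neq 0$.

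Because $\quot{S \smallsetminus \{a\}}$ generates~$A_5$, the graph $\Cay(A_5; \quot S)$ is connected and so has a hamiltonian cycle by \cref{A5}. The key observation for lifting is that~$a$ is the \emph{only} element of the connection set with nonzero $\integer_p$-component: hence, for any hamiltonian cycle $(\bar s_1, \dots, \bar s_{60})$ of $\Cay(A_5; \quot S)$, its voltage $s_1 s_2 \cdots s_{60}$ (the lift of the product to~$G$) equals $(e, kt)$, where $k$ is the net number of times~$a$ is traversed (occurrences of~$a$ minus occurrences of~$a^{-1}$); the $A_5$-component is automatically trivial because the cycle is closed. Thus, once we exhibit such a cycle with $p \nmid k$, the voltage generates~$N$, and the Factor Group Lemma shows that the repeated sequence $(s_1, \dots, s_{60})^p$ is a hamiltonian cycle of $\Cay(A_5 \times \integer_p; S)$.

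It remains to produce a hamiltonian cycle of $\Cay(A_5; \quot S)$ whose net $a$-usage is nonzero modulo~$p$. The cleanest scenario is $\quot a \neq e$ together with a hamiltonian path in $\Cay(A_5; \quot{S \smallsetminus \{a\}})$ from~$e$ to~$\quot a^{-1}$: appending the edge labelled~$\quot a$ closes it into a hamiltonian cycle that uses~$a$ exactly once, giving voltage $(e, t)$, a generator of~$N$. When $\quot a = e$ — so that $a \in N$ and~$a$ contributes no edge to the quotient — this device is unavailable, and I would instead note that $\Cay(A_5 \times \integer_p; S)$ is then the Cartesian product of $\Cay(A_5; \quot{S \smallsetminus \{a\}})$ with the $p$-cycle $\Cay(\integer_p; \{t\})$; a hamiltonian path in the first factor (which exists, again by \cref{A5}) snakes through the $p$ layers and closes up into a hamiltonian cycle.

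The main obstacle is certifying a suitable cycle for \emph{every} admissible~$S$. Minimality of~$S$ forces $T = \quot{S \smallsetminus \{a\}}$ to be a minimal generating set of~$A_5$, with~$\quot a$ an extra element subject to the constraint that $T \smallsetminus \{\quot s\}$ fails to generate~$A_5$ even after adjoining~$\quot a$; up to $\mathrm{Aut}(A_5)$ there are only finitely many such pairs $(T, \quot a)$, so this becomes a finite check, carried out by the same case analysis and backtracking search that underlies \cref{A5}. The hypothesis $p \equiv 1 \pmod{30}$ enters precisely here: it guarantees $\gcd(p, 60) = 1$ and $p \geq 31$, which is what lets one certify $p \nmid k$ for the (bounded, at most~$60$) net counts~$k$ delivered by the explicit cycles. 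When~$\quot a$ is an involution there is additional slack, since $a$ and $a^{-1}$ both project to~$\quot a$ and may be chosen independently at each traversal, so~$k$ can be steered to a nonzero residue; verifying that the orders~$3$ and~$5$ cases also yield an admissible~$k$ is the crux of the argument.
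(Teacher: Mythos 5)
Your first two paragraphs are sound: the reduction to $S \smallsetminus \{a\} \subseteq A_5 \times \{e\}$ and $a = (\quot a, t)$ with $t \neq 0$ is correct (the paper reaches the same conclusion from $\gcd(|A_5|,p)=1$), and the Factor Group Lemma is the right general tool. The gap is in the step you yourself call ``the crux'': you never actually produce, for every admissible~$S$, a hamiltonian cycle in $\Cay(A_5; \quot S)$ whose net $a$-weight is nonzero mod~$p$. The device you propose --- a hamiltonian \emph{path} in $\Cay(A_5; \quot{S \smallsetminus \{a\}})$ from $e$ to $\quot a\,^{-1}$ --- is not supplied by \cref{A5}, which guarantees only hamiltonian \emph{cycles}; hamiltonicity of a Cayley graph does not give Hamilton-connectedness, and no such path result for $A_5$ is available here. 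Deferring the remaining cases to an unperformed ``finite check'' by backtracking search leaves the argument incomplete, and this corollary is meant to be a short consequence of \cref{A5}, not another computation.

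The paper sidesteps the whole issue, and this is also where you have misplaced the hypothesis $p \equiv 1 \pmod{30}$. Since $30 \mid p-1$ and every element of $A_5$ has order $1$, $2$, $3$, or~$5$, one gets that $\quot a\,^{p-1}$ is trivial, so $a^{p-1} \in \integer_p$ is central and generates~$N$. Taking a hamiltonian cycle $(s_i)_{i=1}^{60}$ in $\Cay(A_5; S \smallsetminus \{a\})$ --- which exists by \cref{A5}, since $\langle S \smallsetminus \{a\}\rangle = A_5$ --- and interleaving the excursions $a^{\pm(p-1)}$ to form $(s_{2i-1}, a^{p-1}, s_{2i}, a^{-(p-1)})_{i=1}^{30}$ yields a hamiltonian cycle of $\Cay(A_5 \times \integer_p; S)$ directly, with no need to control voltages of cycles that traverse the generator~$a$. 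Your reading of the congruence as merely ensuring $p > 60$ misses this point: the paper notes explicitly that this corollary is the one place where the full strength of $p \equiv 1 \pmod{30}$ is needed.
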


\begin{proof}
Since $\gcd(|A_5|,p) = 1$, the minimality of~$S$ implies that $\langle S \smallsetminus \{a\}\rangle = A_5$.\refnote{NotMin-<S>=A5}
 (That is, every element of $S \smallsetminus \{a\}$ projects trivially to~$\integer_p$.) From \cref{A5}, we know there is a hamiltonian cycle $(s_i)_{i=1}^{60}$ in $\Cay( A_5; S \smallsetminus \{a\})$. Since, by assumption, $p - 1$ is divisible by $30 = 2 \cdot 3 \cdot 5$ (and every element of~$A_5$ has order $1$, $2$, $3$, or~$5$), we know $\quot a\,^{p-1}$ is trivial. This means $a^{p-1} \in \integer_p$ (so $a^{p-1}$ centralizes~$A_5$), so it is easy to verify that $(s_{2i-1}, a^{p-1}, s_{2i}, a^{-(p-1)})_{i=1}^{30}$ is a hamiltonian cycle in $\Cay( A_5 \times \integer_p ; S)$.\refnote{NotMin}
\end{proof}

\begin{rem}
For definiteness, we point out that we write our permutations on the left, so $g s(i) = g\bigl( s(i) )$ for $g,s \in A_5$ and $i \in \{1,2,3,4,5\}$.
\end{rem}

The remainder of this section 
records a few easy consequences of the following well-known, elementary observation.

\begin{lem}[``Factor Group Lemma'' {\cite[\S2.2]{WitteGallian-survey}}] \label{FGL}
Suppose
\noprelistbreak
 \begin{itemize}
 \item $N$ is a cyclic, normal subgroup of~$G$,
 \item $(s_i)_{i=1}^m$ is a hamiltonian cycle in $\Cay(G/N;S)$,
 and
 \item the voltage $\voltage (s_i)_{i=1}^m$ generates~$N$.
 \end{itemize}
 Then $(s_1,s_2,\ldots,s_m)^{|N|}$ is a hamiltonian cycle in $\Cay(G;S)$.
 \end{lem}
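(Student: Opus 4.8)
The plan is to exhibit the concatenated walk explicitly and verify the two defining properties of a hamiltonian cycle: that it is a closed walk, and that it passes through each vertex of~$G$ exactly once. Write $n = |N|$, let $\pi \colon G \to G/N$ denote the quotient homomorphism, and set $g = \voltage (s_i)_{i=1}^m = s_1 s_2 \cdots s_m$, computed in~$G$. Because $(s_i)_{i=1}^m$ is a closed walk in $\Cay(G/N;S)$, its image $\pi(g)$ is trivial, so $g \in N$; by hypothesis $g$ generates~$N$, so $g$ has order exactly~$n$. Since each $s_i$ lies in $S \cup S^{-1}$, every step of the concatenation $(s_1,\ldots,s_m)^n$ is an edge of $\Cay(G;S)$, so this is indeed a walk, and it remains only to understand the vertices it visits.

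Next I would record those vertices in a convenient indexed form. Let $v_j = s_1 s_2 \cdots s_j$ for $0 \le j \le m-1$ be the vertices traversed during the first copy of $(s_i)_{i=1}^m$ (with $v_0 = e$), so that the endpoint of the first copy is $v_0 \cdot g = g$. A routine induction then shows that the $k$-th copy of the block begins at $g^{k-1}$ and traverses precisely the vertices $g^{k-1} v_j$ for $0 \le j \le m-1$. In particular, after all $n$ copies the walk returns to $g^n = e$, so it is closed; it has length $mn$ and hence passes through the $mn$ vertices $g^{k-1} v_j$ (for $1 \le k \le n$ and $0 \le j \le m-1$) before closing up. Since $(s_i)_{i=1}^m$ is a hamiltonian cycle in $\Cay(G/N;S)$, we have $m = |G/N| = |G|/n$, whence $mn = |G|$; thus the vertex count is correct, and it suffices to prove that these $mn$ listed vertices are distinct.

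The heart of the argument — and the one place where both remaining hypotheses must be used together — is this distinctness. Suppose $g^{k-1} v_j = g^{k'-1} v_{j'}$ with $1 \le k, k' \le n$ and $0 \le j, j' \le m-1$. Applying~$\pi$ and using $g \in N$ gives $\pi(v_j) = \pi(v_{j'})$; but $v_0, \ldots, v_{m-1}$ are the successive vertices of a hamiltonian cycle in $G/N$, so their images $\pi(v_0), \ldots, \pi(v_{m-1})$ are the $m$ distinct elements of $G/N$, forcing $j = j'$. Cancelling $v_j$ then yields $g^{k-1} = g^{k'-1}$, and since $g$ has order exactly~$n$, this forces $k = k'$. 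Hence the listed vertices are pairwise distinct. Combining this with the fact, established above, that the walk is closed and passes through $|G|$ vertices, we conclude that $(s_1,\ldots,s_m)^{|N|}$ is a hamiltonian cycle in $\Cay(G;S)$, as claimed. I do not anticipate any genuine obstacle beyond keeping the two hypotheses straight in this final step: hamiltonicity in the quotient supplies distinctness of the cosets, while the generation hypothesis supplies distinctness of the powers of~$g$ within each coset.
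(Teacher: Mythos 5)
Your proof is correct and complete: closedness follows from $g^n=e$, the vertex count from $mn=|G|$, and distinctness from the two hypotheses exactly as you isolate them (projecting to $G/N$ forces $j=j'$ because the quotient cycle is hamiltonian, and then $g$ having order $|N|$ forces $k=k'$). The paper itself gives no proof of this lemma---it is quoted as a well-known fact from \cite[\S2.2]{WitteGallian-survey}---and your argument is the standard one found there, so there is nothing to reconcile.
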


\begin{cor}[\hskip-.00001pt 
{}{\cite[Cor.~2.11]{M2Slovenian-LowOrder}}] \label{DoubleEdge}
  Suppose
 \begin{itemize}
 \item $N$ is a normal subgroup of~$G$, such that $|N|$ is prime,
 \item the image of $S$ in $G/N$ is a minimal generating set of~$G/N$,
 \item there is a hamiltonian cycle in $\Cay(G/N;S)$,
 and
 \item $s \equiv t \pmod{N}$ for some $s,t \in S \cup S^{-1}$ with $s \neq t$.
 \end{itemize}
 Then there is a hamiltonian cycle in $\Cay(G;S)$.
 \end{cor}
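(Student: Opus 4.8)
The plan is to apply the Factor Group Lemma (\cref{FGL}): starting from the given hamiltonian cycle in $\Cay(G/N;S)$, I would manufacture a lift whose voltage generates~$N$. Two features of the hypotheses make this feasible. First, since $|N|$ is prime, $N$ is cyclic, so \cref{FGL} is applicable; moreover \emph{every} nontrivial element of~$N$ generates it, so it is enough to arrange that the voltage is merely nontrivial. Second, throughout let $\overline x$ denote the image of $x \in G$ in $G/N$, and note that $s \equiv t \pmod N$ means exactly $\overline s = \overline t$.

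Before using the double edge, I would record the observation that makes minimality count: a hamiltonian cycle in $\Cay(G/N;S)$ must use every generator in the image $\overline S$ of~$S$. Indeed, if some $\overline g \in \overline S$ were never traversed, then the whole cycle would lie in the subgraph spanned by the remaining labels; since a hamiltonian cycle is connected and meets every vertex, those remaining labels would generate $G/N$, contradicting the minimality of~$\overline S$. In particular, the common image $\overline u := \overline s = \overline t$ is used: the cycle traverses an edge labeled $\overline u$ (or its reverse).

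Now comes the heart of the matter, the \emph{double edge}. Because $\overline s = \overline t$ while $s \neq t$, the single edge just located has two distinct lifts to $\Cay(G;S)$, one reading~$s$ and one reading~$t$, and I would lift every other edge of the cycle in the same way for both choices. Let $V_1, V_2 \in N$ be the two resulting voltages. Writing the lifted products as $V_1 = P\,s\,Q$ and $V_2 = P\,t\,Q$, where $P$ and $Q$ are the (identical) partial products on the two sides of the swapped edge, the computation is
	$$ V_1 V_2^{-1} = P\,s\,Q\,Q^{-1}\,t^{-1}\,P^{-1} = P\,(s t^{-1})\,P^{-1} . $$
Since $\overline s = \overline t$ gives $s t^{-1} \in N$, and $s \neq t$ gives $s t^{-1} \neq e$, normality of~$N$ makes $V_1 V_2^{-1}$ a nontrivial element of~$N$.

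Hence $V_1$ and $V_2$ cannot both be trivial, so at least one of them generates the prime-order subgroup~$N$; feeding the corresponding lift into \cref{FGL} produces the desired hamiltonian cycle in $\Cay(G;S)$. I expect the voltage computation itself to be routine, and the only point needing care to be the bookkeeping of the previous two paragraphs: verifying that $\overline u$ really occurs in the cycle, and that replacing one lift leaves both the projected cycle and the partial products $P, Q$ unchanged (with the minor variant that, if the cycle meets this label in the reversed direction, one swaps the lifts $s^{-1}, t^{-1}$ instead, and $s^{-1}t \in N\smallsetminus\{e\}$ plays the role of $st^{-1}$).
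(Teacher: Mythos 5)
Your argument is correct: the two lifts of the cycle have voltages differing by a conjugate of $st^{-1}\in N\smallsetminus\{e\}$, so one voltage is nontrivial and (since $|N|$ is prime) generates~$N$, and \cref{FGL} applies; the minimality hypothesis is used exactly where you use it, to guarantee that the label $\quot s=\quot t$ actually occurs in the hamiltonian cycle of $\Cay(G/N;S)$. The paper itself gives no proof of this corollary --- it is quoted from \cite[Cor.~2.11]{M2Slovenian-LowOrder} --- and your proof is essentially the standard argument given there, so there is nothing to add.
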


\begin{cor} \label{all2}
Let $S$ be a minimal generating set of $A_5 \times \integer_p$, such that $\quot S$ is a minimal generating set of~$A_5$. If every element of~$\quot S$ has order~$2$, then $\Cay(A_5 \times \integer_p;S)$ has a hamiltonian cycle.
\end{cor}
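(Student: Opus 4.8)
The plan is to reduce everything to the ``double-edge'' criterion of \cref{DoubleEdge}, applied with the cyclic normal subgroup $N = \{e\} \times \integer_p$, which has prime order~$p$ and satisfies $(A_5 \times \integer_p)/N \cong A_5$. Three of the four hypotheses of \cref{DoubleEdge} come essentially for free: the image of~$S$ in the quotient is $\quot S$, which is a minimal generating set of~$A_5$ by assumption; and, since $\quot S$ generates~$A_5$, the graph $\Cay(A_5; \quot S)$ is connected, so it has a hamiltonian cycle by \cref{A5}. The real content of the proof is therefore just to supply the fourth hypothesis: two \emph{distinct} elements of $S \cup S^{-1}$ that are congruent modulo~$N$, i.e.\ that have the same projection to~$A_5$.

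To locate such a pair, I would first observe that not every generator can project trivially to~$\integer_p$. Indeed, if every $s \in S$ had the form $(\quot s, 0)$, then $\langle S\rangle$ would be contained in the proper subgroup $A_5 \times \{0\}$, contradicting the assumption that $S$ generates $A_5 \times \integer_p$. Hence there is some $s = (\quot s, z) \in S$ with $z \neq 0$.

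For this particular generator, the order-$2$ hypothesis finishes the job. Since $\quot s$ has order~$2$, we have $\quot{s^{-1}} = (\quot s)^{-1} = \quot s$, so $s$ and $s^{-1}$ have equal projections, that is, $s \equiv s^{-1} \pmod N$. On the other hand $s^2 = (e, 2z)$, and because $p$ is odd we have $2z \neq 0$, so $s \neq s^{-1}$. Thus $s$ and $s^{-1}$ are the required pair of distinct elements of $S \cup S^{-1}$, and \cref{DoubleEdge} then delivers a hamiltonian cycle in $\Cay(A_5 \times \integer_p; S)$.

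I do not expect any genuine obstacle here: this is a short deduction from the machinery already set up. The one point that needs care is the verification that $s \neq s^{-1}$, which is precisely where both the hypothesis that the projections are involutions and the oddness of~$p$ are used. It is worth noting that only the oddness of~$p$ is needed, not the full congruence $p \equiv 1 \pmod{30}$, in keeping with the earlier remark that the strong hypothesis is required only for \cref{nonminA5}.
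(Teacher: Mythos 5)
Your proof is correct and matches the paper's intended argument: the paper leaves \cref{all2} unproved as an ``easy consequence,'' but its Assumptions paragraph in \cref{MainProof} (which invokes \cref{DoubleEdge} precisely when an order-$2$ element of $\quot S$ has nontrivial projection to $\integer_p$) shows that your route---find $s\in S$ with nonzero $\integer_p$-component, pair it with $s^{-1}$, and apply \cref{DoubleEdge}---is exactly the one envisioned. Your observations that only oddness of $p$ is needed and that $s\neq s^{-1}$ is the one point requiring care are both accurate.
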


\begin{notation}
Let $C = (s_i)_{i=1}^m$ be a walk in a Cayley graph $\Cay(G;S)$. For $s \in S$, we use $\wt_C(s)$ to denote the difference between the number of occurrences of~$s$ and the number of occurrences of~$s^{-1}$ in~$C$. (This is the \emph{net weight} of the generator~$s$ in~$C$.)
\end{notation}

\begin{lem} \label{det}
Let $S = \{a_1,\ldots,a_k, b_1,\ldots,b_\ell\}$ be a minimal generating set of $A_5 \times \integer_p$, such that $\quot S$ is a minimal generating set of~$A_5$. Assume 
	\begin{itemize}
	\item $\ell \ge 1$, and $|\quot{a_i}| = 2$ for all~$i$,
	\item $C_1,\ldots,C_\ell$ are hamiltonian cycles in $\Cay(A_5; \quot S)$,
	and
	\item $[\wt_{C_i}(b_j)]$ is the $\ell \times \ell$ matrix whose $(i,j)$ entry is $\wt_{C_i}(b_j)$.
	\end{itemize}
If\/ $\det [\wt_{C_i}(b_j)] \not\equiv 0 \pmod{p}$, then $\Cay(A_5 \times \integer_p;S)$ has a hamiltonian cycle.
\end{lem}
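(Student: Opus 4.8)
The plan is to apply the Factor Group Lemma (\cref{FGL}) with $N=\{e\}\times\integer_p$, which is a cyclic normal subgroup of order~$p$ in $A_5\times\integer_p$, with quotient~$A_5$ realized by the projection~$\quot{\phantom{x}}$. Writing each generator as $a_i=(\quot{a_i},\alpha_i)$ and $b_j=(\quot{b_j},\beta_j)$ with $\alpha_i,\beta_j\in\integer_p$, the task reduces to producing a single hamiltonian cycle of $\Cay(A_5;\quot S)$ whose lift to $\Cay(A_5\times\integer_p;S)$ has voltage generating~$N$; since $p$ is prime, this means only that the $\integer_p$-component of the voltage is nonzero. For a lift of~$C_i$, that component equals a sum $\sum_j(\pm\,\text{count})\,\alpha_j+\sum_j\wt_{C_i}(b_j)\,\beta_j$, where the signs in the first sum may be chosen freely: because each $\quot{a_j}$ is an involution, an edge traversed by $\quot{a_j}$ may equally be spelled $\quot{a_j}^{-1}$, and hence lifted to either $a_j$ or $a_j^{-1}$, without altering the underlying cycle in $\Cay(A_5;\quot S)$.

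I would then split into two cases according to the~$\alpha_j$. Since $\quot S$ is a minimal generating set, the graph $\Cay(A_5;\quot S\smallsetminus\{\quot{a_j}\})$ is disconnected for every~$j$, so every generator must occur in each hamiltonian cycle~$C_i$. Thus if some $\alpha_{j_0}\not\equiv0$, I can take~$C_1$, flip the sign on one of its $\quot{a_{j_0}}$-edges, and thereby change the $\integer_p$-component of the voltage by $\pm 2\alpha_{j_0}\not\equiv0\pmod p$; the two spellings cannot both yield voltage~$0$, so one of them generates~$N$ and \cref{FGL} finishes this case. In the remaining case all $\alpha_j\equiv0$, the sign choices are irrelevant, and the voltage of the lift of~$C_i$ is exactly $\sum_j\wt_{C_i}(b_j)\,\beta_j$, i.e.\ the $i$-th entry of $[\wt_{C_i}(b_j)]\,\beta$, where $\beta=(\beta_1,\dots,\beta_\ell)^{\mathsf T}$. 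Because $S$ generates $A_5\times\integer_p$ while $\gcd(|A_5|,p)=1$, the $\integer_p$-components cannot all vanish, so here $\beta\not\equiv0$; as $\det[\wt_{C_i}(b_j)]\not\equiv0\pmod p$ makes the matrix invertible over~$\integer_p$, the vector $[\wt_{C_i}(b_j)]\,\beta$ is nonzero, so some~$C_i$ has nonzero voltage and \cref{FGL} again applies.

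The step I expect to need the most care is justifying the sign-flip freedom together with the ``every generator occurs'' claim, since these are precisely what make the hypothesis $|\quot{a_i}|=2$ do its work: I must verify that respelling an involution edge leaves~$C_i$ a genuine hamiltonian cycle of $\Cay(A_5;\quot S)$, so that \cref{FGL} may legitimately be invoked on the modified lift, and that the modified lift's voltage is the claimed one. It is worth noting that the determinant hypothesis is used only in the second case; in the first case a single cycle together with one nonzero~$\alpha_{j_0}$ already suffices.
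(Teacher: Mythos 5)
Your argument is correct, and its core step is exactly the paper's: writing $b_j = (\quot{b_j}, v_j)$, noting that $(v_1,\ldots,v_\ell)$ is nonzero because $S$ generates $A_5\times\integer_p$ while all the $a_i$ now lie in~$A_5$, and using the invertibility of $[\wt_{C_i}(b_j)]$ over~$\integer_p$ to conclude that some~$C_i$ has voltage $\sum_j \wt_{C_i}(b_j)\,v_j \neq 0$, so that \cref{FGL} applies. The one place you diverge is the reduction to the case where every~$a_i$ lies in~$A_5$: the paper dispatches this in one line by invoking \cref{DoubleEdge} with $s = a_i$ and $t = a_i^{-1}$ (legitimate precisely because $|\quot{a_i}| = 2$ forces $a_i \equiv a_i^{-1} \pmod{\integer_p}$), whereas you inline a direct argument --- every hamiltonian cycle must traverse an $\quot{a_i}$-edge by minimality of~$\quot S$, and respelling one such edge as $a_i^{-1}$ shifts the voltage by $2\alpha_i \neq 0$, so one of the two spellings has voltage generating~$\integer_p$. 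Your inlined argument is in substance the proof of \cref{DoubleEdge} itself, so nothing is lost; the only point worth making explicit is that $2\alpha_i \neq 0$ requires $p$ odd, which holds in the paper's standing context $p \equiv 1 \pmod{30}$.
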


\begin{proof}
We may assume that $a_i \in A_5$ for $1 \le i \le k$, for otherwise \cref{DoubleEdge} applies with $s = a_i$ and $t = a_i^{-1}$. Write $b_i = (\quot{b_i}, v_i)$ (with $v_i \in \integer_p$) for $1 \le i \le \ell$. Since $S$ generates $A_5 \times \integer_p$, the vector $[v_1,\ldots,v_\ell]$ must be nonzero in $(\integer_p)^\ell$. Then, since, by assumption, the matrix $[\wt_{C_i}(b_j)]$ is invertible over~$\integer_p$, this implies $[\wt_{C_i}(b_j)]  [v_1,\ldots,v_\ell]^T \neq \vec 0$ in $(\integer_p)^\ell$, so there is some~$i$, such that $\sum_{j=1}^\ell wt_{C_i}(b_j) \, v_j \neq 0$ in~$\integer_p$. This sum is precisely the voltage $\voltage C_i$ of the walk~$C_i$,\refnote{DetWt}
 so \cref{FGL} provides the desired hamiltonian cycle in $\Cay(A_5 \times \integer_p ; S)$.
\end{proof}

\section{Proof of \texorpdfstring{\cref{A5xZp}}{the main result}}\label{MainProof}

\begin{assumps}
Let $S$ be a minimal generating set of $A_5 \times \integer_p$ (and, in accordance with \cref{QuotDefn}, let $\quot S$ be the image of~$S$ in~$A_5$). 
We may assume $\quot S$ is a minimal generating set of~$A_5$, for otherwise \cref{nonminA5} applies.
We may also assume, for every element~$s$ of~$S$ with $|\quot s| = 2$, that the projection of~$s$ to~$\integer_p$ is trivial, for otherwise \cref{DoubleEdge} applies.
\end{assumps}

\setcounter{case}{0}

\begin{case}
Assume $\#S = 2$.
\end{case}
Write $S = \{a,b\}$.

\begin{subcase}
Assume $|\quot a| = 2$ and $|\quot b| = 3$.
\end{subcase}
By applying an automorphism of~$A_5$, we may assume $\quot a = (1,2)(3,4)$ and $\quot b = (2,4,5)$. \refnotes{why(12)(34)+(245)}{(12)(34)+(245)}
We have the following hamiltonian cycle in $\Cay(A_5; \quot S)$:
	$$ \bigl( (\quot a,\quot b\,^{2})^3, (\quot a,\quot b\,^{-2})^3, (\quot a,\quot b\,^{2},\quot a,\quot b\,^{-2})^2 \bigr)^2 .$$
Each left coset of $\langle \quot b \rangle$ appears as consecutive vertices in this cycle, so we see that\refnote{From3Top}
	$$ \bigl( (a,b^{3p-1})^3, (a,b^{-(3p-1)})^3, (a,b^{3p-1},a,b^{-(3p-1)})^2 \bigr)^2 $$
passes through all of the vertices in each left coset of $\langle b \rangle$, and is therefore a hamiltonian cycle in $\Cay(A_5 \times \integer_p; S)$.


\begin{subcase}
Assume $|\quot a| = 2$ and $\quot |b| = 5$.
\end{subcase}
By applying an automorphism of~$G$, we may assume $\quot b = (1,2,3,4,5)$, and that $\quot a$ is either $(1,2)(3,4)$, $(1,3)(2,4)$, or $(1,4)(2,3)$.\refnote{Why2+(12345)}

	\begin{itemize}  \itemsep=\smallskipamount
	\item For $\quot a = (1,2)(3,4)$, we have the hamiltonian cycle
	\refnote{(12)(34)+(12345)}
		\begin{align*}
		 C = \bigl(
		(\quot a,\quot b&,
		 \quot a,\quot b\,^{4})^2, \quot a,\quot b\,^{2},\quot a,\quot b\,^{-1},\quot a,\quot b\,^{4},\quot a,\quot b, (\quot a,\quot b\,^{2})^3,  
		\\& \quot a,\quot b\,^{-2}, \quot a,\quot b\,^{4},\quot a,\quot b\,^{-2},
		(\quot a,\quot b)^2, \quot a,\quot b\,^{-1},\quot a,\quot b\,^{4},\quot a,\quot b\,^{2}
		\bigr) 
		. \end{align*}
Since $\wt_C(\quot b) = 29 \not\equiv 0 \pmod{p}$, \cref{det} applies.

	\item For $\quot a = (1,3)(2,4)$, we have hamiltonian cycle
	\refnote{(13)(24)+(12345)}
	\begin{align*}
	C = \bigl( 
	\quot a,\quot b\,^{4}&,
	\quot a,\quot b\,^{-1},\quot a,\quot b,\quot a,\quot b\,^{-1},\quot a,\quot b\,^{-4},\quot a,\quot b\,^{-2}, (\quot a,\quot b\,^{-4},\quot a,\quot b\,^{2})^2,
	\\&  \quot a,\quot b\,^{-4}, \quot a,\quot b,\quot a,\quot b\,^{-1},\quot a,\quot b\,^{-4},\quot a,\quot b\,^{-2},\quot a,\quot b\,^{-4},\quot a,\quot b\,^{2} \bigr)
	. \end{align*}
Since $\wt_C(\quot b) = -19 \not\equiv 0 \pmod{p}$, \cref{det} applies.

	\item If $\quot a = (1,4)(2,3)$, then $\quot a$ normalizes~$\quot b$, so $\langle \quot a, \quot b \rangle \neq A_5$, which contradicts the fact that $S$ is a generating set.

	\end{itemize}

\begin{subcase}
Assume $|\quot a| = |\quot b| = 3$.
\end{subcase}
By applying an automorphism of~$G$, we may assume $\quot a = (1,2,3)$ and $\quot b = (3,4,5)$. We have the hamiltonian cycle%
	\refnotes{Why123+345}{(123)+(345)}
	\begin{align*}
	C_1 = \bigl( \quot b,\quot a,\quot b\,^{2}&,
	\quot a\,^{2},\quot b\,^{-2},\quot a\,^{-2},\quot b\,^{2},\quot a\,^{2},\quot b\,^{2},\quot a\,^{-2},\quot b\,^{-2},\quot a\,^{-2},\quot b\,^{2},\quot a\,^{2},\quot b\,^{-2},\quot a,\quot b\,^{-2},
	\\& \quot a\,^{-2},\quot b\,^{2},\quot a,\quot b,\quot a\,^{-1},\quot b\,^{-2},\quot a\,^{2},\quot b\,^{2},\quot a\,^{2},\quot b\,^{-2},\quot a\,^{-2},\quot b\,^{-1},\quot a,\quot b,\quot a\,^{2},\quot b\,^{-1},\quot a\,^{-2},\quot b\,^{-1},\quot a  
	\bigr) . \end{align*}
Note that $\wt_{C_1}(\quot a) = 4$ and $\wt_{C_1}(\quot b) = 0$. Conjugation by the permutation $(1,4)(2,5)$ interchanges $\quot a$ and~$\quot b$, and therefore yields a hamiltonian cycle $C_2$ with $\wt_{C_2}(\quot a) = 0$ and $\wt_{C_2}(\quot b) = 4$. Since $\det \left[\begin{smallmatrix} 4 & 0 \\ 0 & 4 \end{smallmatrix}\right] = 16 \not\equiv 0 \pmod{p}$, \cref{det} applies.

\begin{subcase}
Assume $|\quot a| = 3$ and $|\quot b| = 5$.
\end{subcase}
By applying an automorphism of~$G$ (and perhaps replacing $\quot a$ with its inverse), we may assume $\quot b = (1,2,3,4,5)$, and that $\quot a$ is either $(1,2,3)$ or $(1,2,4)$.\refnote{Why123+12345}

	\begin{itemize}
	\item If $\quot a = (1,2,3)$, then we have the hamiltonian cycles
	\refnotes{(123)+(12345)C1}{(123)+(12345)C2}
	\begin{align*}
 C_1 = &\bigl( \quot a\,^{2},\quot b\,^{-2},\quot a\,^{-2},\quot b\,^{-1},\quot a\,^{-1},\quot b\,^{3}, (\quot a\,^{2},\quot b)^2, \quot a,\quot b\,^{-1}, (\quot a\,^{2},\quot b)^2, \quot a\,^{2},\quot b\,^{-2},\quot a\,^{2},\quot b,\quot a,\quot b\,^{-1},
 	\\& \qquad 
	\quot a\,^{-2},\quot b,\quot a\,^{2},\quot b\,^{-1},\quot a\,^{-2},\quot b,\quot a\,^{-1},\quot b\,^{-2},\quot a\,^{-1},\quot b\,^{-1},
	\quot a\,^{-2},\quot b,\quot a\,^{2}, (\quot b\,^{-1},\quot a\,^{-2})^2, \quot b \bigr) \\
 	\intertext{and}
 C_2 = &\bigl( \quot a\,^{2},\quot b\,^{-1}, (\quot a\,^{2},\quot b)^2, \quot a\,^{2},\quot b\,^{-1},\quot a\,^{-2},\quot b,\quot a\,^{-2},\quot b\,^{-1},\quot a\,^{2},\quot b,\quot a\,^{2},\quot b\,^{-1},\quot a\,^{-2},\quot b\,^{2}, 
 	\\& \qquad
	(\quot a\,^{-2},\quot b\,^{-1})^2, (\quot a\,^{-2},\quot b)^2,\quot a\,^{2},\quot b,\quot a\,^{2},\quot b\,^{-1},\quot a,\quot b,\quot a\,^{-2},\quot b,\quot a\,^{2}, (\quot b\,^{-1},\quot a\,^{-2})^2, \quot b \bigr) 
	 \end{align*}
Then $\bigl[ \wt_{C_1}(\quot a), \wt_{C_1}(\quot b) \bigr] = [5,-1]$ 
and $\bigl[ \wt_{C_2}(\quot a), \wt_{C_2}(\quot b) \bigr] = [-1,3]$. 
Since $\det\left[\begin{smallmatrix} 5 & -1 \\ -1 & 3 \end{smallmatrix} \right] = 14 \not\equiv 0 \pmod{p}$, \cref{det} applies.

	\item  If $\quot a = (1,2,4)$, then we have the hamiltonian cycles
	\refnotes{(124)+(12345)C1}{(124)+(12345)C2}
	\begin{align*}
	 C_1 = &\bigl( \quot a\,^{2},\quot b\,^{-2},\quot a\,^{-2},\quot b\,^{-1},\quot a\,^{-1},\quot b,\quot a\,^{2},\quot b\,^{-1},\quot a\,^{-2},\quot b,
 	\quot a\,^{-2},\quot b\,^{-2},\quot a\,^{-2},\quot b\,^{-1},\quot a\,^{-1},\quot b,\quot a\,^{2},\quot b\,^{-1},\quot a\,^{2},\quot b,
 	\\& \qquad 
	\quot a\,^{-2},\quot b\,^{-1},\quot a\,^{2},\quot b,\quot a\,^{2},\quot b\,^{-1},\quot a\,^{-2},\quot b,\quot a\,^{-2},\quot b\,^{-2},\quot a\,^{-2},
	\quot b\,^{-1},\quot a\,^{-1},\quot b,\quot a\,^{2},\quot b\,^{-1}, (\quot a\,^{-2},\quot b)^2 \bigr) \\
 	\intertext{and}
	 C_2 = &	\bigl( \quot a\,^{2},\quot b\,^{-2},\quot a\,^{-2},\quot b\,^{-1},\quot a\,^{-1},\quot b, (\quot a\,^{2},\quot b\,^{-1})^2, \quot a\,^{2},
	 \quot b\,^{-2},\quot a\,^{-2},\quot b\,^{-1},\quot a\,^{-1},\quot b,\quot a\,^{2},\quot b\,^{-1},\quot a\,^{2},\quot b,
 	\\& \qquad
	\quot a\,^{-2},\quot b\,^{-1},\quot a\,^{2},\quot b,\quot a\,^{2},\quot b\,^{-1},\quot a\,^{-2},\quot b,\quot a\,^{-2},\quot b\,^{-2},\quot a\,^{-2},\quot b\,^{-1},
	\quot a\,^{-1},\quot b,\quot a\,^{2},\quot b\,^{-1}, (\quot a\,^{-2},\quot b)^2 \bigr) 
	 \end{align*}
Then $\bigl[ \wt_{C_1}(\quot a), \wt_{C_1}(\quot b) \bigr] = [-9,-5]$ 
and $\bigl[ \wt_{C_2}(\quot a), \wt_{C_2}(\quot b) \bigr] = [-1,-7]$. 
Since $\det\left[\begin{smallmatrix} -9 & -5 \\ -1 & -7 \end{smallmatrix} \right] = 58 \not\equiv 0 \pmod{p}$, \cref{det} applies.

	\end{itemize}

\begin{subcase}
Assume $|\quot a| = |\quot b| = 5$.
\end{subcase}
By applying an automorphism of~$A_5$, we may assume $\quot a = (1,2,3,4,5)$. 
From Sylow's Theorems, we know that $A_5$ has precisely six Sylow $5$-subgroups. One of them is $\langle \quot a \rangle$, and $\langle \quot a \rangle$ acts transitively on the other~$5$. So we may assume, after conjugating by a power of~$\quot a$, that $\langle \quot b \rangle = \langle (1,2,3,5,4) \rangle$. Then, by replacing $b$ with its inverse if necessary, we may assume $\quot b$ is either $(1,2,3,5,4)$ or $(1,3,4,2,5)$.

\begin{itemize}

	\item  If $\quot b = (1,2,3,5,4)$, then we have the hamiltonian cycle
	\refnote{(12345)+(12354)}
	\begin{align*}
	 C_1 = &\bigl( \quot b,\quot a\,^{-1},\quot b,\quot a,\quot b\,^{4},\quot a,\quot b\,^{2},\quot a\,^{-1},\quot b\,^{2},\quot a\,^{-1},\quot b\,^{-1},\quot a\,^{-1},\quot b,
	 \quot a\,^{-1},\quot b,\quot a,\quot b\,^{2},\quot a\,^{-1},\quot b\,^{2},\quot a\,^{-1},\quot b\,^{-1},
 	\\& \qquad 
	\quot a,\quot b\,^{-2},\quot a\,^{-1},\quot b\,^{-4},\quot a\,^{-1},\quot b\,^{-1},\quot a,\quot b\,^{-4},\quot a,\quot b\,^{-2},\quot a\,^{-1},\quot b\,^{-1},
	\quot a,\quot b\,^{-2},\quot a\,^{-1},\quot b\,^{4},\quot a,\quot b\,^{-2},\quot a \bigr)
	 , \end{align*}
with $\bigl[ \wt_{C_1}(\quot a), \wt_{C_1}(\quot b) \bigr] = [-2,0]$. Conjugating by the permutation $(4,5)$ interchanges $\quot a$ and~$\quot b$, and therefore yields a hamiltonian cycle~$C_2$ with
 $\bigl[ \wt_{C_2}(\quot a), \wt_{C_2}(\quot b) \bigr] = [0,-2]$. 
Since $\det\left[\begin{smallmatrix} -2 & 0 \\ 0 & -2 \end{smallmatrix} \right] = 4 \not\equiv 0 \pmod{p}$, \cref{det} applies.

	\item  If $\quot b = (1,3,4,2,5)$, then we have the hamiltonian cycles
	\refnotes{(12345)+(13425)C1}{(12345)+(13425)C2}
	\begin{align*}
	 C_1 = &\bigl( \quot a\,^{4},\quot b\,^{-1},\quot a\,^{-4},\quot b,\quot a\,^{4},\quot b\,^{-1},\quot a\,^{2},\quot b,\quot a\,^{-2},\quot b,\quot a\,^{-1},\quot b,
	 	\quot a\,^{-4},\quot b\,^{-1},
 	\\& \qquad 
	\quot a\,^{-1},\quot b\,^{-1},\quot a\,^{4},\quot b, (\quot a\,^{-4},\quot b\,^{-1})^2, (\quot a\,^{2},\quot b)^2,\quot a\,^{4}, (\quot b\,^{-1},\quot a)^2,\quot b \bigr) \\
 	\intertext{and}
	 C_2 = &	\bigl( \quot a\,^{4},\quot b\,^{-1},\quot a\,^{-4},\quot b,\quot a\,^{4},\quot b\,^{-1},\quot a\,^{2},\quot b,\quot a\,^{-1},\quot b\,^{-1},\quot a\,^{-4},
	 \quot b\,^{-1},\quot a,\quot b\,^{-1},
 	\\& \qquad
	\quot a\,^{-2},\quot b\,^{-1},\quot a\,^{4},\quot b,(\quot a\,^{-4},\quot b\,^{-1})^2, (\quot a\,^{2},\quot b)^2,\quot a\,^{4},(\quot b\,^{-1},\quot a)^2,\quot b \bigr) 
	 \end{align*}
Then $\bigl[ \wt_{C_1}(\quot a), \wt_{C_1}(\quot b) \bigr] = [4,0]$ 
and $\bigl[ \wt_{C_2}(\quot a), \wt_{C_2}(\quot b) \bigr] = [6,-4]$. 
Since $\det\left[\begin{smallmatrix} 4 & 0 \\ 6 & -4 \end{smallmatrix} \right] = -16 \not\equiv 0 \pmod{p}$, \cref{det} applies.

\end{itemize}

\begin{case}
Assume $\#S \ge 3$.
\end{case}
Since $S$ is minimal, it is easy to see that $\#S = 3$, so we may write $S = \{a,b,c\}$ with $|\quot a| \le |\quot b| \le |\quot c|$.\refnote{S<4}

\begin{subcase}
Assume $|\quot c| = 5$.
\end{subcase}
Since $\quot a$ and~$\quot b$ cannot both normalize~$\langle \quot c \rangle$ (but every proper subgroup of~$A_5$ whose order is divisible by~$5$ has order $5$ or~$10$), we see that either $\langle \quot a, \quot c \rangle = A_5$ or $\langle \quot b,\quot c \rangle = A_5$, which contradicts the minimality of~$S$.

\begin{subcase}
Assume $|\quot a| = |\quot b| = |\quot c| = 3$.
\end{subcase}
By applying an automorphism of~$A_5$, and perhaps replacing some generators by their inverses, we may assume\refnote{Why(125)(135)(145)}
	$$\quot S = \{(1,2,5), (1,3,5), (1,4,5) \}
	= \{\, (1,j,n) \mid 1 < j < n \,\} \text{\quad for $n = 5$} .$$
So \cite[App.~D]{GouldRoth-1jnSequencings} provides the following hamiltonian cycle in $\Cay(A_5; \quot{S})$:\refnote{(125)+(135)+(145)}
	$$R_1 = \Bigl( \bigl( (\quot a\,^2, \quot b)^2, \quot a\,^2, \quot c \bigr)^2,
	\quot a, \quot b, (\quot b, \quot a\,^2)^2, \quot c\,^2, \quot a\,^2, \quot c, \quot b, \quot a, \quot b, \quot c, \quot a, (\quot c, \quot b\,^2)^2, 
	(\quot a\,^2, \quot b)^2, \quot a, \quot c\,^2, \quot a, \quot b\,^2, (\quot a, \quot c\,^2)^2 \Bigr) 	
	. $$
We have 
	$[\wt_{R_1}(\quot a), \wt_{R_1}(\quot b), \wt_{R_1}(\quot c)]
	= [29,17,14]$.
Conjugating by $(2,3,4)$ and $(2,3,4)^2$ cyclically permutes $\{\quot a, \quot b, \quot c\}$, and therefore yields hamiltonian cycles $R_2$ and~$R_3$, such that 
	$$ \text{$[\wt_{R_2}(\quot a), \wt_{R_2}(\quot b), \wt_{R_2}(\quot c)]
	= [17,14,29]$
and
	$[\wt_{R_3}(\quot a), \wt_{R_3}(\quot b), \wt_{R_3}(\quot c)]
	= [14,29,17]$} .$$
Since 
	$$ \det \begin{bmatrix} 29 & 17 & 14 \\ 17 & 14 & 29 \\ 14 & 29 & 17 \end{bmatrix} 
	= 11,340 = 2^2 \cdot 3^4 \cdot 5 \cdot 7 \not\equiv 0 \pmod{p} ,$$
\cref{det} applies.

\begin{subcase}
Assume $|\quot a| = 2$ and $|\quot b| = |\quot c| = 3$.
\end{subcase}
Since $\langle \quot b, \quot c \rangle$ is a proper subgroup of~$A_5$ that is generated by two elements of order~$3$, it is conjugate to~$A_4$. So we may assume $\quot b = (1,2,3)$ and $\quot c = (1,2,4)$.\refnote{Why(12)(45)+(123)+(124)}
 And then, since $S$ is minimal, we may assume $\quot a = (1,2)(4,5)$ (perhaps after conjugating by an element of~$S_5$ that interchanges $\quot b$ and~$\quot c$). 

We have the hamiltonian cycles
	\refnotes{(12)(45)+(123)+(124)C1}{(12)(45)+(123)+(124)C2}
	\begin{align*}
	 C_1 = &\bigl( \quot a,\quot c\,^{-1},\quot a,\quot b,\quot a,\quot c,\quot a,\quot b\,^{2},\quot a,\quot b,\quot c,\quot b\,^{-1},\quot a,\quot b\,^{-2},\quot a,\quot c\,^{-1},\quot a,\quot c\,^{-1},\quot b\,^{2},\quot c,\quot b\,^{-2},\quot a,\quot b\,^{-2},\quot c,\quot a,\quot b,\quot c\,^{-1},
 	\\& \qquad 
	\quot b\,^{-1},\quot a,\quot c\,^{-1},\quot a,\quot b\,^{-2},\quot a,\quot b\,^{-1},\quot c,\quot a,\quot b\,^{2},\quot a,\quot b,\quot c,\quot b\,^{-1},\quot a,\quot c\,^{-1},\quot b\,^{2},\quot c,\quot a,\quot b,\quot c\,^{-1},\quot a,\quot b\,^{-1},\quot a,\quot b \bigr) \\
 	\intertext{and}
	 C_2 = &	\bigl( \quot a,\quot c\,^{-1},\quot a,\quot b,\quot a,\quot c,\quot a,\quot b\,^{2},\quot a,\quot b,\quot c,\quot b\,^{-1},\quot a,\quot c\,^{-1},\quot b\,^{2},\quot c,\quot a,\quot b,\quot c\,^{-1},\quot b\,^{-1},\quot a,\quot b\,^{-2},\quot a,\quot c\,^{-1},\quot b,\quot a,\quot b\,^{2},
 	\\& \qquad
	\quot a,\quot c,\quot a,\quot b,\quot c\,^{-1},\quot b\,^{-1},\quot a,\quot c\,^{-1},\quot a,\quot c\,^{-1},\quot b\,^{2},\quot c,\quot b\,^{-2},\quot a,\quot b\,^{-2},\quot c,\quot a,\quot b\,^{2},\quot a,\quot b,\quot c\,^{-1},\quot a,\quot b\,^{-1},\quot a,\quot b \bigr) 
	 \end{align*}
Then $\bigl[ \wt_{C_1}(\quot b), \wt_{C_1}(\quot c) \bigr] = [1,0]$ 
and $\bigl[ \wt_{C_2}(\quot b), \wt_{C_2}(\quot c) \bigr] = [7,-2]$. 
Since $\det\left[\begin{smallmatrix} 1 & 0 \\ 7 & -2 \end{smallmatrix} \right] = -2 \not\equiv 0 \pmod{p}$, \cref{det} applies.

\begin{subcase}
Assume $|\quot a| = |b| = 2$ and $|\quot c| = 3$.
\end{subcase}
We may assume $\quot c = (1,2,3)$. 

\begin{itemize}  \itemsep=\smallskipamount

\item If $\quot a$ interchanges $4$ and~$5$, then we may assume $\quot a = (1,2)(4,5)$. And then we may assume $\quot b$ is either 
$(1,2)(3,4)$ or $(1,3)(2,4)$.\refnote{Why(12)(45)+(12)(34)+(124)}

\begin{itemize}

\item If $\quot b = (1,2)(3,4)$, we have the hamiltonian cycle
\refnote{(12)(45)+(12)(34)+(123)}
\begin{align*}
 C =  & \Bigl( \quot a,\quot c\,^{-1}, (\quot a,\quot b)^2, \quot a,\quot c\,^{-1},\quot a,\quot b,\quot c\,^{-1},\quot b,\quot c, (\quot b,\quot a)^2, \quot c, (\quot a,\quot b)^2,\quot a,\quot c, 
 (\quot a,\quot b)^2, 
\\& \quad \quot a,\quot c\,^{2}, \quot b,\quot c\,^{-1},\quot b, (\quot a,\quot b)^2, \quot c\,^{-2},
\bigl( (\quot a,\quot b)^2, \quot a,\quot c\,^{-1} \bigr)^2, \quot c\,^{-1},\quot b,\quot a,
\quot c\,^{-1},(\quot a,\quot b)^2 \Bigr)
, \end{align*}
with $\wt_C(\quot c) = -5 \not\equiv 0 \pmod{p}$, so \cref{det} applies.

\item If $\quot b = (1,3)(2,4)$, we have the hamiltonian cycle
\refnote{(12)(45)+(13)(24)+(123)}
\begin{align*}
C =  & \Bigl( (\quot a,\quot b)^4,\quot c,(\quot a,\quot b)^4,\quot a,\quot c\,^{-1},\quot b,\quot a,\quot b,\quot c\,^{-1}, (\quot b,\quot a)^3,\quot c\,^{-1},
\\& \quad \quot b, (\quot a,\quot b)^2, \quot c,\quot b,\quot a,\quot c\,^{-1}, (\quot b,\quot a)^4,\quot b,\quot c, (\quot a,\quot b)^4, \quot a,\quot c\,^{-1},\quot b \Bigr)
, \end{align*}
with $\wt_C(\quot c) = -2 \not\equiv 0 \pmod{p}$, so \cref{det} applies.

\end{itemize}

\item If neither $\quot a$ nor~$\quot b$ interchanges $4$ and~$5$, then one of them must fix~$4$, and the other must fix~$5$. We may assume $\quot a = (1,2)(3,4)$. And then we may assume $\quot b$ is either $(1,2)(3,5)$ or $(1,3)(2,5)$.\refnote{Why(12)(34)+(12)(35)+(123)}

\begin{itemize}
\item If $\quot b = (1,2)(3,5)$, we have the hamiltonian cycle
\refnote{(12)(34)+(12)(35)+(123)}
\begin{align*}
 C =  &\Bigl( \quot a,\quot c\,^{-1}, \quot a,\quot c,\quot b, (\quot a,\quot b)^2, \quot c,
(\quot a, \quot b)^2, (\quot a,\quot c)^2, \quot a,\quot b,\quot a,\quot c\,^{-2},\quot a,
\quot c\,^{-1},\quot b,\quot a,
\\& \quad \quot c,\quot b, \quot a,\quot b,\quot c\,^{2}, \bigl( (\quot a,\quot b)^2, \quot a,\quot c \bigr)^2,
(\quot a,\quot c\,^{-1})^2, (\quot a, \quot b)^2, \quot a,\quot c\,^{-2}, (\quot a,\quot b)^2 \Bigr)
, \end{align*}
with $\wt_C(\quot c) = 1 \not\equiv 0 \pmod{p}$, so \cref{det} applies.

\item If $\quot b = (1,3)(2,5)$, we have the hamiltonian cycle
\refnote{(12)(34)+(13)(25)+(123)}
\begin{align*}
 C =  & \Bigl( \quot a,\quot b,\quot c\,^{-1}, (\quot a,\quot b)^2,
\quot a,\quot c,\quot b, (\quot a,\quot b)^4,
\quot c, (\quot a,\quot b,\quot a,\quot c\,^{-1})^2,
\quot b,\quot a,\quot b,\quot c\,^{2},
\\& \quad (\quot a,\quot b)^2, \quot a,\quot c, 
\bigl( (\quot b,\quot a)^2, \quot c\,^{-1} \bigr)^2,
(\quot b,\quot a)^2,\quot b,\quot c\,^{-1},
(\quot b,\quot a)^2,\quot c\,^{-1},\quot b \Bigr)
, \end{align*}
with $\wt_C(\quot c) = -2 \not\equiv 0 \pmod{p}$, so \cref{det} applies.

\end{itemize}
\end{itemize}

\begin{subcase}
Assume $|\quot a| = |\quot b| = |\quot c| = 2$.
\end{subcase}
Since all generators are of order~$2$, \cref{all2} applies.
\qed


 
 \newpage
 
 \thispagestyle{empty}

\null \cleardoublepage 

\setcounter{page}{1}
\renewcommand{\thepage}{A-\arabic{page}}

\lhead{Appendix: Notes to aid the referee} 

\thispagestyle{plain}

\begin{appendix}

\section*{Appendix: Notes to aid the referee
	\\ \normalfont \emph{Infinitely many nonsolvable groups whose Cayley graphs are hamiltonian}
	\\ {\small by Dave Witte Morris}}
	\setcounter{equation}{0}
	\renewcommand{\theequation}{A.\arabic{equation}}

\begin{aid} \label{NotMin-<S>=A5}
Since $\gcd(|A_5|,p) = 1$, we have $\quot g \in \langle g \rangle$ for every $g \in A_5 \times \integer_p$. Therefore $A_5 = \langle \quot{S \smallsetminus \{a\}} \rangle \subseteq \langle S \smallsetminus \{a\} \rangle$. Since the minimality of~$S$ implies $a \notin \subseteq \langle S \smallsetminus \{a\} \rangle$, we conclude that $\langle S \smallsetminus \{a\} \rangle = A_5$.
\end{aid}

\begin{aid} \label{NotMin}
Since $\langle S \rangle = A_5 \times \integer_p$, and $S \subseteq A_5$, we know that $a$ projects nontrivially to~$\integer_p$. Since $\gcd(|A_5|,p) = 1$, this implies $\integer_p \subseteq \langle a \rangle$, so every element~$g$ of $A_5 \times \integer_p$ can be written (uniquely) in the form $g = x a^r$ with $x \in A_5$ and $0 \le r \le p-1$. Since $(s_i)_{i=1}^{60}$ is a hamiltonian cycle in a Cayley graph on~$A_5$, we have $x = s_1 s_2 \cdots s_k$, for some~$k$ with $0\le k < 60$. If $k = 2i - 1$ is odd, then
	\begin{align*}
	 g 
	 &= x a^r 
	\\&= \left( \prod_{j=1}^{i-1} s_{2j-1} s_{2j} \right) s_{2i-1} a^r
	\\&= \left( \prod_{j=1}^{i-1} (s_{2j-1} a^{p-1} s_{2j} a^{-(p-1)} \right) s_{2i-1} a^r
	&& \text{(because $a^{p-1} \in \integer_p$ is in the center of $A_5 \times \integer_p$)}
	. \end{align*}
If $k = 2i$ is even, then
	\begin{align*}
	 g 
	 &= x a^r 
	\\&= \left( \prod_{j=1}^{i-1} s_{2j-1} s_{2j} \right) s_{2i-1} s_{2i} a^r
	\\&= \left( \prod_{j=1}^{i-1} s_{2j-1} a^{p-1} s_{2j} a^{-(p-1)} \right) s_{2i-1} a^{p-1} s_{2i} a^{-(p-1-r)}
	. \end{align*}
In either case, we see that $g$ is one of the vertices on the walk $(s_{2i-1}, a^{p-1}, s_{2i}, a^{-(p-1)})_{i=1}^{30}$. Therefore, the walk passes through all of the vertices in $\Cay(A_5 \times \integer_p ; S)$.

Also, note that the walk has the correct length ($60p$) to be a hamiltonian cycle. Finally, by using once again the fact that $a^{p-1}$ is in the center, we see that the terminal vertex of the walk is
	$$ \prod_{i=1}^{30} s_{2i-1} a^{p-1} s_{2i} a^{-(p-1)}
	= \prod_{i=1}^{30} s_{2i-1} s_{2i} 
	= \prod_{i=1}^{60} s_i
	= e ,$$
because $(s_i)_{i=1}^{60}$ is a (hamiltonian) cycle, and therefore has terminal vertex~$e$.
We conclude that the walk is a hamiltonian cycle.
\end{aid}

\begin{aid} \label{DetWt}
Write $C_i = (s_k)_{k=1}^n$, and let $\pi = \prod_{k=1}^n s_k$ be the voltage of~$C_i$. Since $C_i$ is a (hamiltonian) cycle in $\Cay(A_5;\quot S)$, we know that $\quot \pi$ is trivial, so $\pi = \sum_{k = 1}^n s_k^*$, where $s_k^*$ is the projection of~$s_k$ to~$\integer_p$. Noting that $(s^{-1})^* = - s^*$ for $s \in S$, we have
	\begin{align*}
	 \pi 
	 &= \sum_{k = 1}^n s_k^*
	\\&= \sum_{s \in S \cup S^{-1}} (\text{\# occurrences of $s$ in~$C_i$}) \cdot s^*
	\\&= \sum_{s \in S} \bigl( (\text{\# occurrences of $s$ in~$C_i$})
		- (\text{\# occurrences of $s^{-1}$ in~$C_i$}) \bigr) \cdot s^*
	\\&= \sum_{s \in S} \wt_{C_i}(s) \, s^*
	\\&= \sum_{j=1}^\ell \wt_{C_i}(b_j) \, v_j
	. \end{align*}
\end{aid}

\begin{aid} \label{why(12)(34)+(245)}
We may assume $\quot a = (1,2)(3,4)$, since every element of order~$2$ in~$A_5$ is conjugate to this. Then, in order for $\langle \quot a, \quot b \rangle$ to be transitive, the support of~$\quot b$ must contain an element of each cycle of~$\quot a$ (including the $1$-cycle $(5)$). So we may assume $\quot b = (2,4,5)$ (after conjugating by $(1,2)$ and/or $(3,4)$, if necessary).
\end{aid}

\begin{aid} \label{(12)(34)+(245)}
A hamiltonian cycle in $\Cay(A_5; \quot a, \quot b)$ with $\quot a = (1,2)(3,4)$ and $\quot b = (2,4,5)$.
\begin{align*}
\begin{array}{ccccccccccc}
&&e\bya{(1,2)(3,4)}\byb{(1,2,3,4,5)}\byb{(1,2,5,3,4)} \\
\bya{(1,5,3)}\byb{(1,5,2,4,3)}\byb{(1,5,4,2,3)}\bya{(1,3,2,5,4)} \\
\byb{(1,3,2)}\byb{(1,3,2,4,5)}\bya{(1,4,2,3,5)}\bybi{(1,4,3,5,2)} \\
\bybi{(1,4)(3,5)}\bya{(1,2,4,5,3)}\bybi{(1,2,3)}\bybi{(1,2,5,4,3)} \\
\bya{(1,5,4)}\bybi{(1,5)(2,4)}\bybi{(1,5,2)}\bya{(2,5)(3,4)} \\
\byb{(2,3,4)}\byb{(3,4,5)}\bya{(1,2)(3,5)}\bybi{(1,2,3,5,4)} \\
\bybi{(1,2,4,3,5)}\bya{(1,4,5)}\byb{(1,4)(2,5)}\byb{(1,4,2)} \\
\bya{(2,4,3)}\bybi{(2,5,3)}\bybi{(2,3)(4,5)}\bya{(1,3,5,4,2)} \\
\byb{(1,3,5)}\byb{(1,3,5,2,4)}\bya{(1,4,5,2,3)}\byb{(1,4,2,5,3)} \\
\byb{(1,4,3)}\bya{(1,2,4)}\byb{(1,2)(4,5)}\byb{(1,2,5)} \\
\bya{(1,5)(3,4)}\bybi{(1,5,3,4,2)}\bybi{(1,5,2,3,4)}\bya{(1,3)(2,5)} \\
\bybi{(1,3)(4,5)}\bybi{(1,3)(2,4)}\bya{(1,4)(2,3)}\bybi{(1,4,3,2,5)} \\
\bybi{(1,4,5,3,2)}\bya{(2,4)(3,5)}\byb{(3,5,4)}\byb{(2,3,5)} \\
\bya{(1,3,4,5,2)}\bybi{(1,3,4)}\bybi{(1,3,4,2,5)}\bya{(1,5)(2,3)} \\
\byb{(1,5,3,2,4)}\byb{(1,5,4,3,2)}\bya{(2,5,4)}\bybi{(2,4,5)} \\
\bybi{e}
\end{array}
\end{align*}
\end{aid}

\begin{aid} \label{From3Top}
For convenience, let
	$$ C_1 =  \bigl( (\quot a,\quot b\,^{2})^3, (\quot a,\quot b\,^{-2})^3, (\quot a,\quot b\,^{2},\quot a,\quot b\,^{-2})^2 \bigr)^2 $$
and
	$$ C_2 = \bigl( (a,b^{3p-1})^3, (a,b^{-(3p-1)})^3, (a,b^{3p-1},a,b^{-(3p-1)})^2 \bigr)^2 .$$
Note that $\quot{b^{3p-1}} = \quot b{}^2$ (since $|\quot b| = 3$). 

Let $x$ be the terminal vertex of the walk~$C_2$. From the preceding paragraph, we see that $\quot x$ is the terminal vertex of the hamiltonian cycle~$C_1$, so $\quot x$ is trivial. The projection of~$x$ to~$\integer_p$ is also trivial, because $\wt_{C_2}(b) = 0$. Therefore, the walk $C_2$ is closed. Since $C_2$ has the correct length to be a hamiltonian, we need only show that it passes through every element of~$A_5 \times \integer_p$.

From the fact that $\quot{b^{3p-1}} = \quot b{}^2$, we see that the vertices of~$\quot{C_2}$ are precisely the same elements of~$A_5$ as the vertices of~$C_1$; that is, the walk $\quot{C_2}$ passes through every element of~$A_5$. Thus, given any $v \in A_5 \times \integer_p$, the walk~$C_2$ visits some vertex~$w$ with $\quot w = \quot v$; that is, $v$ and~$w$ are in the same coset of~$\integer_p$. Since $\integer_p \subseteq \langle b \rangle$, this implies that $v$ and~$w$ are in the same left coset of~$\langle b \rangle$. Also, since there are never two consecutive appearances of~$a$ in~$C_2$, and every occurrence of~$b$ is contained in a string $b^{3p-1}$, we know that $C_2$ traverses every element of any left coset of~$\langle b \rangle$ that it enters. In particular, $C_2$ traverses every element of the left coset of~$w$, so it passes through~$v$. 
\end{aid}

\begin{aid} \label{Why2+(12345)}
We may assume $\quot b = (1,2,3,4,5)$, after conjugating by some permutation in~$S_5$. Since $|\quot a| = 2$, it has a fixed point, which we may assume is~$5$ (after conjugating by a power of~$\quot b$). So $|\quot a|$ must be either $(1,2)(3,4)$, $(1,3)(2,4)$, or $(1,4)(2,3)$.
\end{aid}

\begin{aid} \label{(12)(34)+(12345)}
A hamiltonian cycle in $\Cay(A_5; \quot a, \quot b)$ with $\quot a = (1,2)(3,4)$ and $\quot b = (1,2,3,4,5)$.
\begin{align*}
\begin{array}{ccccccccccc}
&&e\bya{(1,2)(3,4)}\byb{(2,4,5)}\bya{(1,4,3,5,2)} \\
\byb{(2,5,4)}\byb{(1,5)(2,3)}\byb{(1,3,4)}\byb{(1,2,4,5,3)} \\
\bya{(1,4)(3,5)}\byb{(1,2,5,4,3)}\bya{(1,5,4)}\byb{(1,2,3)} \\
\byb{(1,3,4,5,2)}\byb{(2,4)(3,5)}\byb{(1,4,3,2,5)}\bya{(1,5)(2,4)} \\
\byb{(1,4)(2,3)}\byb{(1,3)(4,5)}\bya{(1,2,3,5,4)}\bybi{(1,4,5)} \\
\bya{(1,2,4,3,5)}\byb{(1,4)(2,5)}\byb{(1,5,4,2,3)}\byb{(1,3,2)} \\
\byb{(3,4,5)}\bya{(1,2)(3,5)}\byb{(2,5)(3,4)}\bya{(1,5,2)} \\
\byb{(2,3,4)}\byb{(1,3,2,4,5)}\bya{(1,4,2,3,5)}\byb{(1,3,2,5,4)} \\
\byb{(1,5,3)}\bya{(1,2,5,3,4)}\byb{(1,5,2,4,3)}\byb{(1,4,2)} \\
\bya{(2,4,3)}\bybi{(1,5,3,4,2)}\bybi{(1,3)(2,5)}\bya{(1,5,2,3,4)} \\
\byb{(1,3)(2,4)}\byb{(1,4,5,3,2)}\byb{(3,5,4)}\byb{(1,2,5)} \\
\bya{(1,5)(3,4)}\bybi{(2,5,3)}\bybi{(1,3,5,4,2)}\bya{(2,3)(4,5)} \\
\byb{(1,3,5)}\bya{(1,2,3,4,5)}\byb{(1,3,5,2,4)}\bya{(1,4,5,2,3)} \\
\bybi{(1,2,4)}\bya{(1,4,3)}\byb{(1,2)(4,5)}\byb{(2,3,5)} \\
\byb{(1,3,4,2,5)}\byb{(1,5,3,2,4)}\bya{(1,4,2,5,3)}\byb{(1,5,4,3,2)} \\
\byb{e}
\end{array}
\end{align*}
\end{aid}

\begin{aid} \label{(13)(24)+(12345)}
A hamiltonian cycle in $\Cay(A_5; \quot a, \quot b)$ with $\quot a = (1,3)(2,4)$ and $\quot b = (1,2,3,4,5)$.
\begin{align*}
\begin{array}{ccccccccccc}
&&e\bya{(1,3)(2,4)}\byb{(1,4,5,3,2)}\byb{(3,5,4)} \\
\byb{(1,2,5)}\byb{(1,5,2,3,4)}\bya{(1,4,3,5,2)}\bybi{(1,2,4,5,3)} \\
\bya{(2,5,3)}\byb{(1,5)(3,4)}\bya{(1,4,2,3,5)}\bybi{(2,4,5)} \\
\bya{(1,3)(2,5)}\bybi{(1,2,3,5,4)}\bybi{(1,4,5)}\bybi{(2,4,3)} \\
\bybi{(1,5,3,4,2)}\bya{(1,4)(3,5)}\bybi{(1,3,2,4,5)}\bybi{(2,3,4)} \\
\bya{(1,4,3)}\bybi{(1,5,3,2,4)}\bybi{(1,3,4,2,5)}\bybi{(2,3,5)} \\
\bybi{(1,2)(4,5)}\bya{(1,3,2,5,4)}\byb{(1,5,3)}\byb{(1,2)(3,4)} \\
\bya{(1,4)(2,3)}\bybi{(1,5)(2,4)}\bybi{(2,5)(3,4)}\bybi{(1,2)(3,5)} \\
\bybi{(1,3)(4,5)}\bya{(2,5,4)}\byb{(1,5)(2,3)}\byb{(1,3,4)} \\
\bya{(1,4,2)}\bybi{(1,5,2,4,3)}\bybi{(1,2,5,3,4)}\bybi{(1,3,5)} \\
\bybi{(2,3)(4,5)}\bya{(1,2,5,4,3)}\byb{(1,5,2)}\bya{(1,3,5,2,4)} \\
\bybi{(1,2,3,4,5)}\bya{(1,4,3,2,5)}\bybi{(2,4)(3,5)}\bybi{(1,3,4,5,2)} \\
\bybi{(1,2,3)}\bybi{(1,5,4)}\bya{(1,3,5,4,2)}\bybi{(1,4,5,2,3)} \\
\bybi{(1,2,4)}\bya{(1,3,2)}\bybi{(1,5,4,2,3)}\bybi{(1,4)(2,5)} \\
\bybi{(1,2,4,3,5)}\bybi{(3,4,5)}\bya{(1,4,2,5,3)}\byb{(1,5,4,3,2)} \\
\byb{e}
\end{array}
\end{align*}
\end{aid}

\begin{aid} \label{Why123+345}
The union of the supports of~$\quot a$ and~$\quot b$ must be all of $\{1,2,3,4,5\}$, since $\langle \quot a, \quot b \rangle$ is transitive. Since each support consists of only three elements, the intersection must be a single element, which we may assume is~$3$. Then, by renumbering, we may assume the support of~$\quot a$ is $\{1,2,3\}$ and the support of~$\quot b$ is $\{3,4,5\}$. Therefore, either $\quot a$ or $\quot a\,^{-1}$ is $(1,2,3)$, and either $\quot b$ or $\quot b\,^{-1}$ is $(3,4,5)$.
\end{aid}

\begin{aid} \label{(123)+(345)}
A hamiltonian cycle~$C_1$ in $\Cay(A_5; \quot a, \quot b)$ with $\quot a = (1,2,3)$ and $\quot b = (3,4,5)$.
\begin{align*}
\begin{array}{ccccccccccc}
&&e\byb{(3,4,5)}\bya{(1,2,4,5,3)}\byb{(1,2,4,3,5)} \\
\byb{(1,2,4)}\bya{(1,4)(2,3)}\bya{(1,3,4)}\bybi{(1,3,5)} \\
\bybi{(1,3)(4,5)}\byai{(2,3)(4,5)}\byai{(1,2)(4,5)}\byb{(1,2)(3,5)} \\
\byb{(1,2)(3,4)}\bya{(2,4,3)}\bya{(1,4,3)}\byb{(1,4,5)} \\
\byb{(1,4)(3,5)}\byai{(1,5,3,2,4)}\byai{(1,2,5,3,4)}\bybi{(1,2,5)} \\
\bybi{(1,2,5,4,3)}\byai{(3,5,4)}\byai{(1,5,4,3,2)}\byb{(1,5,2)} \\
\byb{(1,5,3,4,2)}\bya{(2,4)(3,5)}\bya{(1,4,2,5,3)}\bybi{(1,4)(2,5)} \\
\bybi{(1,4,3,2,5)}\bya{(1,5)(3,4)}\bybi{(1,5,3)}\bybi{(1,5,4)} \\
\byai{(1,3,2,5,4)}\byai{(1,2,3,5,4)}\byb{(1,2,3)}\byb{(1,2,3,4,5)} \\
\bya{(1,3,2,4,5)}\byb{(1,3,5,2,4)}\byai{(1,5,2,3,4)}\bybi{(1,5)(2,3)} \\
\bybi{(1,5,4,2,3)}\bya{(1,3,5,4,2)}\bya{(2,5,4)}\byb{(2,5,3)} \\
\byb{(2,5)(3,4)}\bya{(1,5,2,4,3)}\bya{(1,4,3,5,2)}\bybi{(1,4,5,3,2)} \\
\bybi{(1,4,2)}\byai{(1,3)(2,4)}\byai{(2,3,4)}\bybi{(2,3,5)} \\
\bya{(1,3)(2,5)}\byb{(1,3,4,2,5)}\bya{(1,5)(2,4)}\bya{(1,4,2,3,5)} \\
\bybi{(1,4,5,2,3)}\byai{(2,4,5)}\byai{(1,3,4,5,2)}\bybi{(1,3,2)} \\
\bya{e}
\end{array}
\end{align*}
\end{aid}

\begin{aid} \label{Why123+12345}
We may assume $\quot b = (1,2,3,4,5)$, by replacing it with a conjugate. Then the two fixed points of the $3$-cycle $\quot a$ are either consecutive or are separated by only one element (in circular order). Thus, after conjugating by a power of~$\quot b$, we may assume that the fixed points of~$\quot a$ are either $4$ and~$5$ or $3$ and~$5$. Hence, $\quot a$ is either $(1,2,3)^{\pm1}$ or $(1,2,4)^{\pm1}$.
\end{aid}

\begin{aid} \label{(123)+(12345)C1}
A hamiltonian cycle~$C_1$ in $\Cay(A_5; \quot a, \quot b)$ with $\quot a = (1,2,3)$ and $\quot b = (1,2,3,4,5)$.
\begin{align*}
\begin{array}{ccccccccccc}
&&e\bya{(1,2,3)}\bya{(1,3,2)}\bybi{(1,5,4,2,3)} \\
\bybi{(1,4)(2,5)}\byai{(1,3,5,2,4)}\byai{(1,5,2,3,4)}\bybi{(1,2,5)} \\
\byai{(1,3,5)}\byb{(1,2,5,3,4)}\byb{(1,5,2,4,3)}\byb{(1,4,2)} \\
\bya{(2,3,4)}\bya{(1,3)(2,4)}\byb{(1,4,5,3,2)}\bya{(3,4,5)} \\
\bya{(1,2,4,5,3)}\byb{(1,4,3,5,2)}\bya{(2,5)(3,4)}\bybi{(1,2)(3,5)} \\
\bya{(2,5,3)}\bya{(1,5,3)}\byb{(1,2)(3,4)}\bya{(2,4,3)} \\
\bya{(1,4,3)}\byb{(1,2)(4,5)}\bya{(2,3)(4,5)}\bya{(1,3)(4,5)} \\
\bybi{(1,4)(2,3)}\bybi{(1,5)(2,4)}\bya{(1,4,2,3,5)}\bya{(1,3,4,2,5)} \\
\byb{(1,5,3,2,4)}\bya{(1,4)(3,5)}\bybi{(1,3,2,4,5)}\byai{(1,2,3,4,5)} \\
\byai{(1,4,5)}\byb{(1,2,3,5,4)}\bya{(1,3,2,5,4)}\bya{(1,5,4)} \\
\bybi{(1,4,3,2,5)}\byai{(1,2,4,3,5)}\byai{(1,5)(3,4)}\byb{(1,2,4)} \\
\byai{(1,3,4)}\bybi{(1,5)(2,3)}\bybi{(2,5,4)}\byai{(1,3,5,4,2)} \\
\bybi{(1,4,5,2,3)}\byai{(2,4,5)}\byai{(1,3,4,5,2)}\byb{(2,4)(3,5)} \\
\bya{(1,4,2,5,3)}\bya{(1,5,3,4,2)}\bybi{(1,3)(2,5)}\byai{(2,3,5)} \\
\byai{(1,5,2)}\bybi{(1,2,5,4,3)}\byai{(3,5,4)}\byai{(1,5,4,3,2)} \\
\byb{e}
\end{array}
\end{align*}
\end{aid}

\begin{aid} \label{(123)+(12345)C2}
A second hamiltonian cycle~$C_2$ in $\Cay(A_5; \quot a, \quot b)$ with $\quot a = (1,2,3)$ and $\quot b = (1,2,3,4,5)$.
\begin{align*}
\begin{array}{ccccccccccc}
&&e\bya{(1,2,3)}\bya{(1,3,2)}\bybi{(1,5,4,2,3)} \\
\bya{(1,3,5,4,2)}\bya{(2,5,4)}\byb{(1,5)(2,3)}\bya{(1,3,5)} \\
\bya{(1,2,5)}\byb{(1,5,2,3,4)}\bya{(1,3,5,2,4)}\bya{(1,4)(2,5)} \\
\bybi{(1,2,4,3,5)}\byai{(1,5)(3,4)}\byai{(1,4,3,2,5)}\byb{(1,5,4)} \\
\byai{(1,3,2,5,4)}\byai{(1,2,3,5,4)}\bybi{(1,4,5)}\bya{(1,2,3,4,5)} \\
\bya{(1,3,2,4,5)}\byb{(1,4)(3,5)}\bya{(1,2,5,3,4)}\bya{(1,5,3,2,4)} \\
\bybi{(1,3,4,2,5)}\byai{(1,4,2,3,5)}\byai{(1,5)(2,4)}\byb{(1,4)(2,3)} \\
\byb{(1,3)(4,5)}\byai{(2,3)(4,5)}\byai{(1,2)(4,5)}\bybi{(1,4,3)} \\
\byai{(2,4,3)}\byai{(1,2)(3,4)}\bybi{(1,5,3)}\byai{(2,5,3)} \\
\byai{(1,2)(3,5)}\byb{(2,5)(3,4)}\byai{(1,4,3,5,2)}\byai{(1,5,2,4,3)} \\
\byb{(1,4,2)}\bya{(2,3,4)}\bya{(1,3)(2,4)}\byb{(1,4,5,3,2)} \\
\bya{(3,4,5)}\bya{(1,2,4,5,3)}\bybi{(1,3,4)}\bya{(1,2,4)} \\
\byb{(1,4,5,2,3)}\byai{(2,4,5)}\byai{(1,3,4,5,2)}\byb{(2,4)(3,5)} \\
\bya{(1,4,2,5,3)}\bya{(1,5,3,4,2)}\bybi{(1,3)(2,5)}\byai{(2,3,5)} \\
\byai{(1,5,2)}\bybi{(1,2,5,4,3)}\byai{(3,5,4)}\byai{(1,5,4,3,2)} \\
\byb{e}
\end{array}
\end{align*}
\end{aid}

\begin{aid} \label{(124)+(12345)C1}
A hamiltonian cycle~$C_1$ in $\Cay(A_5; \quot a, \quot b)$ with $\quot a = (1,2,4)$ and $\quot b = (1,2,3,4,5)$.
\begin{align*}
\begin{array}{ccccccccccc}
&&e\bya{(1,2,4)}\bya{(1,4,2)}\bybi{(1,5,2,4,3)} \\
\bybi{(1,2,5,3,4)}\byai{(3,4,5)}\byai{(1,5,3,4,2)}\bybi{(1,3)(2,5)} \\
\byai{(1,4,5,2,3)}\byb{(1,3,5,4,2)}\bya{(3,5,4)}\bya{(1,2,3,5,4)} \\
\bybi{(1,4,5)}\byai{(1,5)(2,4)}\byai{(1,2,5)}\byb{(1,5,2,3,4)} \\
\byai{(2,5)(3,4)}\byai{(1,3,4,5,2)}\bybi{(1,2,3)}\bybi{(1,5,4)} \\
\byai{(2,5,4)}\byai{(1,2)(4,5)}\bybi{(1,4,3)}\byai{(1,3)(2,4)} \\
\byb{(1,4,5,3,2)}\bya{(2,5,3)}\bya{(1,5,3,2,4)}\bybi{(1,3,4,2,5)} \\
\bya{(1,5)(3,4)}\bya{(1,2,3,4,5)}\byb{(1,3,5,2,4)}\byai{(2,3,5)} \\
\byai{(1,4,3,5,2)}\bybi{(1,2,4,5,3)}\bya{(1,4,2,5,3)}\bya{(1,5,3)} \\
\byb{(1,2)(3,4)}\bya{(2,3,4)}\bya{(1,3,4)}\bybi{(1,5)(2,3)} \\
\byai{(1,4,3,2,5)}\byai{(1,3,2,4,5)}\byb{(1,4)(3,5)}\byai{(2,4)(3,5)} \\
\byai{(1,2)(3,5)}\bybi{(1,3)(4,5)}\bybi{(1,4)(2,3)}\byai{(2,4,3)} \\
\byai{(1,3,2)}\bybi{(1,5,4,2,3)}\byai{(1,2,5,4,3)}\byb{(1,5,2)} \\
\bya{(2,4,5)}\bya{(1,4)(2,5)}\bybi{(1,2,4,3,5)}\byai{(1,3,5)} \\
\byai{(1,4,2,3,5)}\byb{(1,3,2,5,4)}\byai{(2,3)(4,5)}\byai{(1,5,4,3,2)} \\
\byb{e}\end{array}
\end{align*}
\end{aid}

\begin{aid} \label{(124)+(12345)C2}
A second hamiltonian cycle~$C_2$ in $\Cay(A_5; \quot a, \quot b)$ with $\quot a = (1,2,4)$ and $\quot b = (1,2,3,4,5)$.
\begin{align*}
\begin{array}{ccccccccccc}
&&e\bya{(1,2,4)}\bya{(1,4,2)}\bybi{(1,5,2,4,3)} \\
\bybi{(1,2,5,3,4)}\byai{(3,4,5)}\byai{(1,5,3,4,2)}\bybi{(1,3)(2,5)} \\
\byai{(1,4,5,2,3)}\byb{(1,3,5,4,2)}\bya{(3,5,4)}\bya{(1,2,3,5,4)} \\
\bybi{(1,4,5)}\bya{(1,2,5)}\bya{(1,5)(2,4)}\bybi{(2,5)(3,4)} \\
\bya{(1,5,2,3,4)}\bya{(1,3,4,5,2)}\bybi{(1,2,3)}\bybi{(1,5,4)} \\
\byai{(2,5,4)}\byai{(1,2)(4,5)}\bybi{(1,4,3)}\byai{(1,3)(2,4)} \\
\byb{(1,4,5,3,2)}\bya{(2,5,3)}\bya{(1,5,3,2,4)}\bybi{(1,3,4,2,5)} \\
\bya{(1,5)(3,4)}\bya{(1,2,3,4,5)}\byb{(1,3,5,2,4)}\byai{(2,3,5)} \\
\byai{(1,4,3,5,2)}\bybi{(1,2,4,5,3)}\bya{(1,4,2,5,3)}\bya{(1,5,3)} \\
\byb{(1,2)(3,4)}\bya{(2,3,4)}\bya{(1,3,4)}\bybi{(1,5)(2,3)} \\
\byai{(1,4,3,2,5)}\byai{(1,3,2,4,5)}\byb{(1,4)(3,5)}\byai{(2,4)(3,5)} \\
\byai{(1,2)(3,5)}\bybi{(1,3)(4,5)}\bybi{(1,4)(2,3)}\byai{(2,4,3)} \\
\byai{(1,3,2)}\bybi{(1,5,4,2,3)}\byai{(1,2,5,4,3)}\byb{(1,5,2)} \\
\bya{(2,4,5)}\bya{(1,4)(2,5)}\bybi{(1,2,4,3,5)}\byai{(1,3,5)} \\
\byai{(1,4,2,3,5)}\byb{(1,3,2,5,4)}\byai{(2,3)(4,5)}\byai{(1,5,4,3,2)} \\
\byb{e}
\end{array}
\end{align*}
\end{aid}

\begin{aid} \label{(12345)+(12354)}
A hamiltonian cycle~$C_1$ in $\Cay(A_5; \quot a, \quot b)$ with $\quot a = (1,2,3,4,5)$ and $\quot b = (1,2,3,5,4)$.
\begin{align*}
\begin{array}{ccccccccccc}
&&e\byb{(1,2,3,5,4)}\byai{(1,4,5)}\byb{(1,2,3)} \\
\bya{(1,3,4,5,2)}\byb{(2,4,3)}\byb{(1,4)(3,5)}\byb{(1,2,5)} \\
\byb{(1,5,4,2,3)}\bya{(1,3,2)}\byb{(3,5,4)}\byb{(1,2,5,3,4)} \\
\byai{(1,3,5)}\byb{(1,2,5,4,3)}\byb{(1,5,3,4,2)}\byai{(1,3)(2,5)} \\
\bybi{(1,4,2,3,5)}\byai{(2,4,5)}\byb{(1,4)(2,3)}\byai{(1,5)(2,4)} \\
\byb{(1,4,5,2,3)}\bya{(1,3,5,4,2)}\byb{(2,5)(3,4)}\byb{(1,5,3,2,4)} \\
\byai{(1,3,4,2,5)}\byb{(1,5,2,4,3)}\byb{(1,4,5,3,2)}\byai{(1,3)(2,4)} \\
\bybi{(1,2,3,4,5)}\bya{(1,3,5,2,4)}\bybi{(2,3,4)}\bybi{(1,2)(4,5)} \\
\byai{(1,4,3)}\bybi{(1,3,2,4,5)}\bybi{(1,5,2,3,4)}\bybi{(2,5,4)} \\
\bybi{(1,2)(3,5)}\byai{(1,3)(4,5)}\bybi{(1,5)(2,3)}\bya{(1,3,4)} \\
\bybi{(2,3)(4,5)}\bybi{(1,5,2)}\bybi{(1,4,2,5,3)}\bybi{(1,2,4,3,5)} \\
\bya{(1,4)(2,5)}\bybi{(2,4)(3,5)}\bybi{(1,2)(3,4)}\byai{(1,5,3)} \\
\bybi{(1,4,3,2,5)}\bya{(1,5,4)}\bybi{(2,5,3)}\bybi{(1,4,3,5,2)} \\
\byai{(1,2,4,5,3)}\byb{(1,4,2)}\byb{(2,3,5)}\byb{(1,3,2,5,4)} \\
\byb{(1,5)(3,4)}\bya{(1,2,4)}\bybi{(3,4,5)}\bybi{(1,5,4,3,2)} \\
\bya{e}
\end{array}
\end{align*}
\end{aid}

\begin{aid} \label{(12345)+(13425)C1}
A hamiltonian cycle~$C_1$ in $\Cay(A_5; \quot a, \quot b)$ with $\quot a = (1,2,3,4,5)$ and $\quot b = (1,3,4,2,5)$.
\begin{align*}
\begin{array}{ccccccccccc}
&&e\bya{(1,2,3,4,5)}\bya{(1,3,5,2,4)}\bya{(1,4,2,5,3)} \\
\bya{(1,5,4,3,2)}\bybi{(1,4,2,3,5)}\byai{(2,4,5)}\byai{(1,2)(3,4)} \\
\byai{(1,5,3)}\byai{(1,3,2,5,4)}\byb{(1,2,4,5,3)}\bya{(1,4,3,5,2)} \\
\bya{(2,5,4)}\bya{(1,5)(2,3)}\bya{(1,3,4)}\bybi{(1,5,2)} \\
\bya{(2,3,4)}\bya{(1,3,2,4,5)}\byb{(1,2)(3,5)}\byai{(1,3)(4,5)} \\
\byai{(1,4)(2,3)}\byb{(1,2,5,4,3)}\byai{(1,4)(3,5)}\byb{(1,5,4,2,3)} \\
\byai{(1,4)(2,5)}\byai{(1,2,4,3,5)}\byai{(3,4,5)}\byai{(1,3,2)} \\
\bybi{(1,5)(2,4)}\byai{(2,5)(3,4)}\bybi{(1,2,3)}\bya{(1,3,4,5,2)} \\
\bya{(2,4)(3,5)}\bya{(1,4,3,2,5)}\bya{(1,5,4)}\byb{(1,3)(2,4)} \\
\byai{(1,5,2,3,4)}\byai{(1,2,5)}\byai{(3,5,4)}\byai{(1,4,5,3,2)} \\
\bybi{(1,3,4,2,5)}\byai{(2,3,5)}\byai{(1,2)(4,5)}\byai{(1,4,3)} \\
\byai{(1,5,3,2,4)}\bybi{(1,3,5,4,2)}\bya{(2,5,3)}\bya{(1,5)(3,4)} \\
\byb{(1,4,2)}\bya{(2,3)(4,5)}\bya{(1,3,5)}\byb{(1,5,3,4,2)} \\
\bya{(2,4,3)}\bya{(1,4,5)}\bya{(1,2,3,5,4)}\bya{(1,3)(2,5)} \\
\bybi{(1,2,4)}\bya{(1,4,5,2,3)}\bybi{(1,2,5,3,4)}\bya{(1,5,2,4,3)} \\
\byb{e}
\end{array}
\end{align*}
\end{aid}

\begin{aid} \label{(12345)+(13425)C2}
A second hamiltonian cycle~$C_2$ in $\Cay(A_5; \quot a, \quot b)$ with $\quot a = (1,2,3,4,5)$ and $\quot b = (1,3,4,2,5)$.
\begin{align*}
\begin{array}{ccccccccccc}
&&e\bya{(1,2,3,4,5)}\bya{(1,3,5,2,4)}\bya{(1,4,2,5,3)} \\
\bya{(1,5,4,3,2)}\bybi{(1,4,2,3,5)}\byai{(2,4,5)}\byai{(1,2)(3,4)} \\
\byai{(1,5,3)}\byai{(1,3,2,5,4)}\byb{(1,2,4,5,3)}\bya{(1,4,3,5,2)} \\
\bya{(2,5,4)}\bya{(1,5)(2,3)}\bya{(1,3,4)}\bybi{(1,5,2)} \\
\bya{(2,3,4)}\bya{(1,3,2,4,5)}\byb{(1,2)(3,5)}\byai{(1,3)(4,5)} \\
\bybi{(1,4)(2,5)}\byai{(1,2,4,3,5)}\byai{(3,4,5)}\byai{(1,3,2)} \\
\byai{(1,5,4,2,3)}\bybi{(1,4)(3,5)}\bya{(1,2,5,4,3)}\bybi{(1,4)(2,3)} \\
\byai{(1,5)(2,4)}\byai{(2,5)(3,4)}\bybi{(1,2,3)}\bya{(1,3,4,5,2)} \\
\bya{(2,4)(3,5)}\bya{(1,4,3,2,5)}\bya{(1,5,4)}\byb{(1,3)(2,4)} \\
\byai{(1,5,2,3,4)}\byai{(1,2,5)}\byai{(3,5,4)}\byai{(1,4,5,3,2)} \\
\bybi{(1,3,4,2,5)}\byai{(2,3,5)}\byai{(1,2)(4,5)}\byai{(1,4,3)} \\
\byai{(1,5,3,2,4)}\bybi{(1,3,5,4,2)}\bya{(2,5,3)}\bya{(1,5)(3,4)} \\
\byb{(1,4,2)}\bya{(2,3)(4,5)}\bya{(1,3,5)}\byb{(1,5,3,4,2)} \\
\bya{(2,4,3)}\bya{(1,4,5)}\bya{(1,2,3,5,4)}\bya{(1,3)(2,5)} \\
\bybi{(1,2,4)}\bya{(1,4,5,2,3)}\bybi{(1,2,5,3,4)}\bya{(1,5,2,4,3)} \\
\byb{e}
\end{array}
\end{align*}
\end{aid}

\begin{aid} \label{S<4}
Write $\quot S = \{s_1,\ldots,s_r\}$ and let $H_i = \langle s_1,\ldots,s_i \rangle$. Suppose $r \ge 4$. Since $\quot S$ is a minimal generating set, we have $H_{i-1} \subsetneq H_i$ for each~$i$. Since $|A_5| = 2^2 \cdot 3 \cdot 5$ is the product of only $4$~primes, we must have $r = 4$ and $|H_i : H_{i-1}|$ is prime for $i = 1,\ldots,4$. 
(Recall that $A_5$ is simple, so the natural action of~$A_5$ on the coset space $A_5/H$ of any proper subgroup~$H$ must be faithful. This implies $|A_5 : H| > 4$, because $|A_5| > |S_4|$.)
Since $A_4$ is the only subgroup of prime index in~$A_5$ (up to conjugacy), we may assume $H_3 = A_4$. Then $H_2$ must be the Sylow $2$-subgroup of~$A_4$, since that is the only subgroup of prime index. So $H_3 = A_4$ is generated by $s_1$ and~$s_3$, contradicting the minimality of~$\quot S$. 
\end{aid}

\begin{aid} \label{Why(125)(135)(145)}
The minimality of~$\quot S$ implies that $\langle \quot a, \quot b \rangle \neq A_5$, so there must be at least two elements in the intersection of the supports of $\quot a$ and~$\quot b$. The supports cannot be equal, since $\quot a \neq \quot b\,^{\pm1}$. Therefore, the intersection of the support consists of two points, which we may assume are $1$ and~$5$. Then we may assume $\quot a = (1,2,5)$ and $\quot b = (1,3,5)$. Now the support of~$\quot c$ must contain exactly two points from the support of~$\quot a$ and exactly two points from the support of~$\quot b$. This implies that the support of~$\quot c$ is $\{1,4,5\}$. so $\quot c = (1,4,5)^{\pm1}$.
\end{aid}

\begin{aid} \label{(125)+(135)+(145)}
A hamiltonian cycle~$R_1$ in $\Cay(A_5; \quot a, \quot b, \quot c)$ with $\quot a = (1,2,5)$, $\quot b = (1,3,5)$, and $\quot c = (1,4,5)$.
\begin{align*}
\begin{array}{ccccccccccc}
&&e\bya{(1,2,5)}\bya{(1,5,2)}\byb{(1,3,2)} \\
\bya{(2,5,3)}\bya{(1,5)(2,3)}\byb{(1,2,3)}\bya{(1,3)(2,5)} \\
\bya{(1,5,3)}\byc{(1,4,3)}\bya{(1,2,5,4,3)}\bya{(1,5,2,4,3)} \\
\byb{(2,4,3)}\bya{(1,4,3,2,5)}\bya{(1,5,4,3,2)}\byb{(1,2)(3,4)} \\
\bya{(2,5)(3,4)}\bya{(1,5)(3,4)}\byc{(1,3,4)}\bya{(1,2,5,3,4)} \\
\byb{(1,4)(2,5)}\byb{(1,3,2,5,4)}\bya{(1,5,3,2,4)}\bya{(1,4)(2,3)} \\
\byb{(1,2,3,5,4)}\bya{(1,3,5,2,4)}\bya{(1,4)(3,5)}\byc{(3,5,4)} \\
\byc{(1,3,5)}\bya{(1,2)(3,5)}\bya{(2,3,5)}\byc{(1,4,2,3,5)} \\
\byb{(1,5,4,2,3)}\bya{(1,3)(2,4)}\byb{(2,4)(3,5)}\byc{(1,2,4,3,5)} \\
\bya{(1,4,3,5,2)}\byc{(1,3,5,4,2)}\byb{(1,5,3,4,2)}\byb{(1,4,2)} \\
\byc{(1,2)(4,5)}\byb{(1,3,4,5,2)}\byb{(1,4,5,3,2)}\bya{(2,3)(4,5)} \\
\bya{(1,3,2,4,5)}\byb{(1,2,4,5,3)}\bya{(1,4,5,2,3)}\bya{(1,3)(4,5)} \\
\byb{(3,4,5)}\bya{(1,2,3,4,5)}\byc{(1,5,2,3,4)}\byc{(2,3,4)} \\
\bya{(1,3,4,2,5)}\byb{(1,4,2,5,3)}\byb{(2,5,4)}\bya{(1,5)(2,4)} \\
\byc{(1,2,4)}\byc{(2,4,5)}\bya{(1,4,5)}\byc{(1,5,4)} \\
\byc{e}
\end{array}
\end{align*}
\end{aid}

\begin{aid} \label{Why(12)(45)+(123)+(124)}
By renumbering, we may assume $\quot b = (1,2,3)$. Then, since $\langle \quot b, \quot c \rangle = A_4$, we know that $4$ is in the support of~$\quot c$, and that the other two elements of the support are in $\{1,2,3\}$. Therefore, after conjugating by a power of~$\quot b$, we may assume the support of~$\quot c$ is $\{1,2,4\}$. So $\quot c = (1,2,4)^{\pm1}$.

Since $\langle \quot a, \quot b, \quot c \rangle = A_5$ is transitive on $\{1,2,3,4,5\}$, we know that $5$ is in the support of~$\quot a$.
Since $\langle \quot a, \quot b \rangle \neq A_5$, we know that the support of~$\quot b$ does not contain precisely one element of each of the cycles of~$\quot a$ (and similarly for the support of~$\quot c$).

If one of the cycles of~$\quot a$ is disjoint from the support of~$\quot b$, then the cycle must be  $(4,5)$. The support of~$\quot c$ contains precisely one element of this cycle, and cannot be disjoint from the other cycle, so it must contain the entire cycle. The only $2$-element subset of $\{1,2,3\}$ contained in the support of~$\quot c$ is $\{1,2\}$, so $\quot a = (1,2)(3,4)$.

We may now assume that no cycle of~$\quot a$ is disjoint from the support of~$\quot b$ or the support of~$\quot c$. So the cycle $(x,5)$ in $\quot a$ must be either $(1,5)$ or $(2,5)$. We may assume it is $(1,5)$ (after conjugating by $(1,2)$ and replacing $\quot b$ and $\quot c$ by their inverses, if necessary). The other cycle either is $(2,3)$ (in which case, $\langle \quot a, \quot c \rangle = A_5$), or is either $(2,4)$ or $(3,4)$ (in which case, $\langle \quot a, \quot b \rangle = A_5$), which contradicts the minimality of $\quot S$.
\end{aid}

\begin{aid} \label{(12)(45)+(123)+(124)C1}
A hamiltonian cycle~$C_1$ in $\Cay(A_5; \quot a, \quot b, \quot c, \quot c)$ with $\quot a = (1,2)(4,5)$, $\quot b = (1,2,3)$, and $\quot c = (1,2,4)$.
\begin{align*}
\begin{array}{ccccccccccc}
&&e\bya{(1,2)(4,5)}\byci{(1,5,4)}\bya{(1,2,5)} \\
\byb{(1,5)(2,3)}\bya{(1,3,2,5,4)}\byc{(1,5,4,3,2)}\bya{(2,5,3)} \\
\byb{(1,5,3)}\byb{(1,2)(3,5)}\bya{(3,5,4)}\byb{(1,2,5,4,3)} \\
\byc{(1,5,4,2,3)}\bybi{(2,5,4)}\bya{(1,5,2)}\bybi{(1,3)(2,5)} \\
\bybi{(2,3,5)}\bya{(1,3,5,4,2)}\byci{(1,2,3,5,4)}\bya{(1,3,5)} \\
\byci{(1,4,2,3,5)}\byb{(1,3,4,2,5)}\byb{(1,5)(2,4)}\byc{(1,4,5)} \\
\bybi{(1,3,2,4,5)}\bybi{(1,2,3,4,5)}\bya{(1,3,4)}\bybi{(1,4)(2,3)} \\
\bybi{(1,2,4)}\byc{(1,4,2)}\bya{(2,4,5)}\byb{(1,4,5,2,3)} \\
\byci{(1,5,2,4,3)}\bybi{(2,5)(3,4)}\bya{(1,5,3,4,2)}\byci{(1,2,5,3,4)} \\
\bya{(1,5)(3,4)}\bybi{(1,4,3,2,5)}\bybi{(1,2,4,3,5)}\bya{(1,4)(3,5)} \\
\bybi{(1,5,3,2,4)}\byc{(1,4,5,3,2)}\bya{(2,4,3)}\byb{(1,4,3)} \\
\byb{(1,2)(3,4)}\bya{(3,4,5)}\byb{(1,2,4,5,3)}\byc{(1,4,2,5,3)} \\
\bybi{(2,4)(3,5)}\bya{(1,4,3,5,2)}\byci{(1,3,5,2,4)}\byb{(1,4)(2,5)} \\
\byb{(1,5,2,3,4)}\byc{(1,3,4,5,2)}\bya{(2,3,4)}\byb{(1,3)(2,4)} \\
\byci{(1,2,3)}\bya{(1,3)(4,5)}\bybi{(2,3)(4,5)}\bya{(1,3,2)} \\
\byb{e}
\end{array}
\end{align*}
\end{aid}

\begin{aid} \label{(12)(45)+(123)+(124)C2}
A second hamiltonian cycle~$C_2$ in $\Cay(A_5; \quot a, \quot b, \quot c)$ with $\quot a = (1,2)(4,5)$, $\quot b = (1,2,3)$, and $\quot c = (1,2,4)$.
\begin{align*}
\begin{array}{ccccccccccc}
&&e\bya{(1,2)(4,5)}\byci{(1,5,4)}\bya{(1,2,5)} \\
\byb{(1,5)(2,3)}\bya{(1,3,2,5,4)}\byc{(1,5,4,3,2)}\bya{(2,5,3)} \\
\byb{(1,5,3)}\byb{(1,2)(3,5)}\bya{(3,5,4)}\byb{(1,2,5,4,3)} \\
\byc{(1,5,4,2,3)}\bybi{(2,5,4)}\bya{(1,5,2)}\byci{(1,4)(2,5)} \\
\byb{(1,5,2,3,4)}\byb{(1,3,5,2,4)}\byc{(1,4,3,5,2)}\bya{(2,4)(3,5)} \\
\byb{(1,4,2,5,3)}\byci{(1,2,4,5,3)}\bybi{(3,4,5)}\bya{(1,2)(3,4)} \\
\bybi{(1,4,3)}\bybi{(2,4,3)}\bya{(1,4,5,3,2)}\byci{(1,5,3,2,4)} \\
\byb{(1,4)(3,5)}\bya{(1,2,4,3,5)}\byb{(1,4,3,2,5)}\byb{(1,5)(3,4)} \\
\bya{(1,2,5,3,4)}\byc{(1,5,3,4,2)}\bya{(2,5)(3,4)}\byb{(1,5,2,4,3)} \\
\byci{(1,3)(2,5)}\bybi{(2,3,5)}\bya{(1,3,5,4,2)}\byci{(1,2,3,5,4)} \\
\bya{(1,3,5)}\byci{(1,4,2,3,5)}\byb{(1,3,4,2,5)}\byb{(1,5)(2,4)} \\
\byc{(1,4,5)}\bybi{(1,3,2,4,5)}\bybi{(1,2,3,4,5)}\bya{(1,3,4)} \\
\bybi{(1,4)(2,3)}\bybi{(1,2,4)}\byc{(1,4,2)}\bya{(2,4,5)} \\
\byb{(1,4,5,2,3)}\byb{(1,3,4,5,2)}\bya{(2,3,4)}\byb{(1,3)(2,4)} \\
\byci{(1,2,3)}\bya{(1,3)(4,5)}\bybi{(2,3)(4,5)}\bya{(1,3,2)} \\
\byb{e}
\end{array}
\end{align*}
\end{aid}

\begin{aid} \label{Why(12)(45)+(12)(34)+(124)}
By assumption, one of the $2$-cycles in~$\quot a$ is $(4,5)$. The other $2$-cycle must be contained in $\{1,2,3\}$, so, after conjugating by a power of~$\quot c$, we may assume $\quot a = (1,2)(4,5)$.

Since $\langle \quot a, \quot b, \quot c \rangle$ is transitive on $\{1,2,3,4,5\}$, the permutation~$\quot b$ cannot have $(4,5)$ as one of its $2$-cycles. So no $2$-cycle in $\quot b$ is disjoint from the support of~$\quot c$. So one of the $2$-cycles must be contained in the support of~$\quot c$, which is $\{1,2,3\}$. This implies that either $4$ or~$5$ must be fixed by~$\quot b$. We may assume it is $5$ that is fixed (after conjugating by $(4,5)$ if necessary). So $\quot b$ is either $(1,2)(3,4)$ or $(1,3)(2,4)$ or $(2,3)(1,4)$. Since the last two are conjugate by $(1,2)$ (which centralizes $\quot a$ and inverts~$\quot b$), they do not both need to be considered.
\end{aid}

\begin{aid} \label{(12)(45)+(12)(34)+(123)}
A hamiltonian cycle in $\Cay(A_5; \quot a, \quot b, \quot c)$ with $\quot a = (1,2)(4,5)$, $\quot b = (1,2)(3,4)$, and $\quot c = (1,2,3)$.
\begin{align*}
\begin{array}{ccccccccccc}
&&e\bya{(1,2)(4,5)}\byci{(1,3)(4,5)}\bya{(1,2,3)} \\
\byb{(1,3,4)}\bya{(1,2,3,4,5)}\byb{(1,3,5)}\bya{(1,2,3,5,4)} \\
\byci{(1,5,4)}\bya{(1,2,5)}\byb{(1,5)(3,4)}\byci{(1,4,3,2,5)} \\
\byb{(1,5)(2,4)}\byc{(1,4,2,3,5)}\byb{(1,3,2,4,5)}\bya{(1,4)(2,3)} \\
\byb{(1,3)(2,4)}\bya{(1,4,5,2,3)}\byc{(1,3,4,5,2)}\bya{(2,3,4)} \\
\byb{(1,3,2)}\bya{(2,3)(4,5)}\byb{(1,3,5,4,2)}\bya{(2,3,5)} \\
\byc{(1,3)(2,5)}\bya{(1,5,4,2,3)}\byb{(1,3,2,5,4)}\bya{(1,5)(2,3)} \\
\byb{(1,3,4,2,5)}\bya{(1,5,2,3,4)}\byc{(1,3,5,2,4)}\byc{(1,4)(2,5)} \\
\byb{(1,5,2,4,3)}\byci{(2,5)(3,4)}\byb{(1,5,2)}\bya{(2,5,4)} \\
\byb{(1,5,4,3,2)}\bya{(2,5,3)}\byb{(1,5,3,4,2)}\byci{(1,4,2,5,3)} \\
\byci{(2,4)(3,5)}\bya{(1,4,3,5,2)}\byb{(2,4,5)}\bya{(1,4,2)} \\
\byb{(2,4,3)}\bya{(1,4,5,3,2)}\byci{(1,2,4,5,3)}\bya{(1,4,3)} \\
\byb{(1,2,4)}\bya{(1,4,5)}\byb{(1,2,4,3,5)}\bya{(1,4)(3,5)} \\
\byci{(1,5,3,2,4)}\byci{(1,2,5,3,4)}\byb{(1,5,3)}\bya{(1,2,5,4,3)} \\
\byci{(3,5,4)}\bya{(1,2)(3,5)}\byb{(3,4,5)}\bya{(1,2)(3,4)} \\
\byb{e}
\end{array}
\end{align*}
\end{aid}

\begin{aid} \label{(12)(45)+(13)(24)+(123)}
A hamiltonian cycle in $\Cay(A_5; \quot a, \quot b, \quot c)$ with $\quot a = (1,2)(4,5)$, $\quot b = (1,3)(2,4)$, and $\quot c = (1,2,3)$.
\begin{align*}
\begin{array}{ccccccccccc}
&&e\bya{(1,2)(4,5)}\byb{(1,3,2,5,4)}\bya{(1,5)(2,3)} \\
\byb{(1,2,4,3,5)}\bya{(1,4)(3,5)}\byb{(1,5,3,4,2)}\bya{(2,5)(3,4)} \\
\byb{(1,4,5,2,3)}\byc{(1,3,4,5,2)}\bya{(2,3,4)}\byb{(1,4,3)} \\
\bya{(1,2,4,5,3)}\byb{(2,5,3)}\bya{(1,5,4,3,2)}\byb{(1,2,3,5,4)} \\
\bya{(1,3,5)}\byb{(1,5)(2,4)}\bya{(1,4)(2,5)}\byci{(1,3,5,2,4)} \\
\byb{(1,5,2)}\bya{(2,5,4)}\byb{(1,3)(4,5)}\byci{(2,3)(4,5)} \\
\byb{(1,2,5,4,3)}\bya{(1,5,3)}\byb{(2,4)(3,5)}\bya{(1,4,3,5,2)} \\
\byb{(1,5,2,3,4)}\bya{(1,3,4,2,5)}\byci{(1,4,2,3,5)}\byb{(1,5)(3,4)} \\
\bya{(1,2,5,3,4)}\byb{(1,4,5,3,2)}\bya{(2,4,3)}\byb{(1,2,3)} \\
\byc{(1,3,2)}\byb{(1,2,4)}\bya{(1,4,5)}\byci{(1,3,2,4,5)} \\
\byb{(1,2,5)}\bya{(1,5,4)}\byb{(1,3,5,4,2)}\bya{(2,3,5)} \\
\byb{(1,5,2,4,3)}\bya{(1,4,2,5,3)}\byb{(3,4,5)}\bya{(1,2)(3,4)} \\
\byb{(1,4)(2,3)}\byc{(1,3,4)}\bya{(1,2,3,4,5)}\byb{(1,4,3,2,5)} \\
\bya{(1,5,3,2,4)}\byb{(1,2)(3,5)}\bya{(3,5,4)}\byb{(1,5,4,2,3)} \\
\bya{(1,3)(2,5)}\byb{(2,4,5)}\bya{(1,4,2)}\byci{(1,3)(2,4)} \\
\byb{e}
\end{array}
\end{align*}
\end{aid}

\begin{aid} \label{Why(12)(34)+(12)(35)+(123)}
Since $\langle \quot a, \quot c \rangle \neq A_5$, and $\quot a$ does not interchange $4$ and~$5$, we see that $\quot a$ must fix either $4$ or~$5$. Similarly for~$\quot b$. Furthermore, $\quot a$ and~$\quot b$ cannot both fix~$4$ (or~$5$), since $\langle \quot a, \quot b, \quot c \rangle$ is transitive. So one of them fixes~$4$, and the other fixes~$5$.

We may assume it is $\quot a$ that fixes~$5$. Then we may assume $\quot a = (1,2)(3,4)$, after conjugating by a power of~$\quot c$.

Then $\quot b$ must be either $(1,2)(3,5)$ or $(1,3)(2,5)$ or $(1,5)(2,3)$. However, the last two are conjugate by $(1,2)$ (which centralizes~$\quot a$ and inverts $\quot c$), so they do not both need to be considered.
\end{aid}

\begin{aid} \label{(12)(34)+(12)(35)+(123)}
A hamiltonian cycle in $\Cay(A_5; \quot a, \quot b, \quot c)$ with $\quot a = (1,2)(3,4)$, $\quot b = (1,2)(3,5)$, and $\quot c = (1,2,3)$.
\begin{align*}
\begin{array}{ccccccccccc}
&&e\bya{(1,2)(3,4)}\byci{(1,4,3)}\bya{(1,2,4)} \\
\byc{(1,4)(2,3)}\byb{(1,3,5,2,4)}\bya{(1,4,5,2,3)}\byb{(1,3,2,4,5)} \\
\bya{(1,4,2,3,5)}\byb{(1,3)(2,4)}\byc{(1,4,2)}\bya{(2,4,3)} \\
\byb{(1,4,3,5,2)}\bya{(2,4,5)}\byb{(1,4,5,3,2)}\bya{(2,4)(3,5)} \\
\byc{(1,4,2,5,3)}\bya{(1,5,3,2,4)}\byc{(1,4)(3,5)}\bya{(1,2,4,5,3)} \\
\byb{(1,4,5)}\bya{(1,2,4,3,5)}\byci{(1,5)(3,4)}\byci{(1,4,3,2,5)} \\
\bya{(1,5)(2,4)}\byci{(1,3,4,2,5)}\byb{(1,5,4,2,3)}\bya{(1,3,2,5,4)} \\
\byc{(1,5,4)}\byb{(1,2,5,3,4)}\bya{(1,5,3)}\byb{(1,2,5)} \\
\byc{(1,5)(2,3)}\byc{(1,3,5)}\bya{(1,2,3,4,5)}\byb{(1,3)(4,5)} \\
\bya{(1,2,3,5,4)}\byb{(1,3,4)}\bya{(1,2,3)}\byc{(1,3,2)} \\
\bya{(2,3,4)}\byb{(1,3,5,4,2)}\bya{(2,3)(4,5)}\byb{(1,3,4,5,2)} \\
\bya{(2,3,5)}\byc{(1,3)(2,5)}\bya{(1,5,2,3,4)}\byci{(1,4)(2,5)} \\
\bya{(1,5,2,4,3)}\byci{(2,5)(3,4)}\bya{(1,5,2)}\byb{(2,5,3)} \\
\bya{(1,5,3,4,2)}\byb{(2,5,4)}\bya{(1,5,4,3,2)}\byci{(1,2,5,4,3)} \\
\byci{(3,5,4)}\bya{(1,2)(4,5)}\byb{(3,4,5)}\bya{(1,2)(3,5)} \\
\byb{e}
\end{array}
\end{align*}
\end{aid}

\begin{aid} \label{(12)(34)+(13)(25)+(123)}
A hamiltonian cycle in $\Cay(A_5; \quot a, \quot b, \quot c)$ with $\quot a = (1,2)(3,4)$, $\quot b = (1,3)(2,5)$, and $\quot c = (1,2,3)$.
\begin{align*}
\begin{array}{ccccccccccc}
&&e\bya{(1,2)(3,4)}\byb{(1,4,3,2,5)}\byci{(1,2,4,3,5)} \\
\bya{(1,4,5)}\byb{(1,3,4,5,2)}\bya{(2,3,5)}\byb{(1,5,3)} \\
\bya{(1,2,5,3,4)}\byc{(1,5,3,2,4)}\byb{(1,2,3,5,4)}\bya{(1,3)(4,5)} \\
\byb{(2,4,5)}\bya{(1,4,3,5,2)}\byb{(1,5)(3,4)}\bya{(1,2,5)} \\
\byb{(1,3,2)}\bya{(2,3,4)}\byb{(1,4,2,5,3)}\byc{(1,5,3,4,2)} \\
\bya{(2,5,3)}\byb{(1,2,3)}\bya{(1,3,4)}\byci{(1,4)(2,3)} \\
\bya{(1,3)(2,4)}\byb{(2,5,4)}\bya{(1,5,4,3,2)}\byci{(1,2,5,4,3)} \\
\byb{(2,4,3)}\bya{(1,4,2)}\byb{(1,3,4,2,5)}\byc{(1,5)(2,4)} \\
\byc{(1,4,2,3,5)}\bya{(1,3,2,4,5)}\byb{(1,2)(4,5)}\bya{(3,5,4)} \\
\byb{(1,5,2,4,3)}\bya{(1,4)(2,5)}\byc{(1,5,2,3,4)}\byb{(1,4)(3,5)} \\
\bya{(1,2,4,5,3)}\byb{(2,3)(4,5)}\bya{(1,3,5,4,2)}\byci{(1,5,4,2,3)} \\
\byb{(2,4)(3,5)}\bya{(1,4,5,3,2)}\byb{(1,2,3,4,5)}\bya{(1,3,5)} \\
\byci{(1,5)(2,3)}\byb{(1,2)(3,5)}\bya{(3,4,5)}\byb{(1,4,5,2,3)} \\
\bya{(1,3,5,2,4)}\byb{(1,5,4)}\byci{(1,3,2,5,4)}\byb{(1,2,4)} \\
\bya{(1,4,3)}\byb{(2,5)(3,4)}\bya{(1,5,2)}\byci{(1,3)(2,5)} \\
\byb{e}
\end{array}
\end{align*}
\end{aid}

\end{appendix}

\end{document}